\theoremstyle{plain}
\newtheorem{theorem}{Theorem}[section]
\newtheorem{lemma}[theorem]{Lemma}
\theoremstyle{definition}
\newtheorem{definition}[theorem]{Definition}
\theoremstyle{remark}
\newtheorem{remark}[theorem]{Remark}
\numberwithin{equation}{section}
\begin{document}

\title[Blow-up problem for semilinear heat equation]
{Blow-up problem for semilinear heat equation
with nonlinear nonlocal Neumann boundary condition}

\author[A. Gladkov]{Alexander Gladkov}
\address{Alexander Gladkov \\ Department of Mechanics and Mathematics
\\ Belarusian State University \\ Nezavisimosti Avenue 4, 220030
Minsk, Belarus} \email{gladkoval@mail.ru}

\subjclass[2010]{Primary 35B44; 35K58; 35K61}

\keywords{Semilinear heat equation; nonlocal boundary condition;
blow-up}

\begin{abstract}
In this paper, we consider a semilinear parabolic equation with
nonlinear nonlocal Neumann boundary condition and nonnegative
initial datum. We first prove global existence results. We then
give some criteria on this problem which determine whether the
solutions blow up in finite time for large or for all nontrivial
initial data. Finally, we show that under certain conditions
blow-up occurs only on the boundary.
\end{abstract}

\maketitle

\section{Introduction}\label{in}
In this paper we consider the initial boundary value problem for
the following semilinear parabolic equation
\begin{equation}\label{v:u}
    u_t=\Delta u-c(x,t)u^p,\;x\in\Omega,\;t>0,
\end{equation}
with nonlinear nonlocal boundary condition
\begin{equation}\label{v:g}
\frac{\partial
u(x,t)}{\partial\nu}=\int_{\Omega}{k(x,y,t)u^l(y,t)}\,dy,\;x\in\partial\Omega,\;
t \geq 0,
\end{equation}
and initial datum
\begin{equation}\label{v:n}
    u(x,0)=u_{0}(x),\; x\in\Omega,
\end{equation}
where $p>0,\,l>0$, $\Omega$ is a bounded domain in $\mathbb{R}^n$
for $n\geq1$ with smooth boundary $\partial\Omega$, $\nu$ is unit
outward normal on $\partial\Omega.$

Throughout this paper we suppose that the functions
$c(x,t),\;k(x,y,t)$ and $u_0(x)$ satisfy the following conditions:
\begin{equation*}
c(x,t)\in
C^{\alpha}_{loc}(\overline{\Omega}\times[0,+\infty)),\;0<\alpha<1,\;c(x,t)\geq0;
\end{equation*}
\begin{equation*}
k(x, y, t)\in
C(\partial\Omega\times\overline{\Omega}\times[0,+\infty)),\;k(x,y,t)\geq0;
\end{equation*}
\begin{equation*}
u_0(x)\in C^1(\overline{\Omega}),\;u_0(x)\geq0 \textrm{ in }
\Omega,  \;\frac{\partial u_0(x)}{\partial\nu}=\int_{\Omega}{k(x,
y,0)u_0^l(y)}\,dy \textrm{ on } \partial\Omega.
\end{equation*}
Many authors have studied blow-up problem for parabolic equations
and systems with nonlocal boundary conditions (see, for
example,~\cite{CL}--\cite{ZK}
and the references therein). In particular, the initial boundary
value problem for equation~(\ref{v:u}) with nonlinear nonlocal
boundary condition
\begin{equation*}
u(x,t)=\int_{\Omega}k(x,y,t)u^l(y,t)\,dy,\;x\in\partial\Omega,\;t>0,
\end{equation*}
was considered for $c(x,t) \leq 0$ and $c(x,t) \geq 0$
in~\cite{GladkovKim} and~\cite{GladkovGuedda}, respectively. The
problem~(\ref{v:u})--(\ref{v:n}) with $c(x,t) \leq 0$ was
investigated in~\cite{Gladkov_Kavitova2} and closed problem was
analyzed in~\cite{MV}.

Local existence theorem, comparison and uniqueness results for
problem~(\ref{v:u})--(\ref{v:n}) have been established
in~\cite{Gladkov}.

In this paper we obtain necessary and sufficient conditions for
the existence of global solutions as well as for a blow-up in
finite time of solutions for problem~(\ref{v:u})--(\ref{v:n}). Our
global existence and blow-up results depend on the behavior of the
functions $c(x,t)$ and $k(x,y,t)$ as $t \to \infty.$

This paper is organized as follows. The global existence theorem
for any initial data and blow-up in finite time of solutions for
large initial data are proved in section 2. In section 3 we
present finite time blow-up of all nontrivial solutions as well as
the existence of global solutions for small initial data. Finally,
in section 4 we show that under certain conditions blow-up occurs
only on the boundary.

\section{Global existence}\label{gl}
\setcounter{equation}{0} \setcounter{theorem}{0}

Let $Q_T=\Omega\times(0,T),\;S_T=\partial\Omega\times(0,T)$,
$\Gamma_T=S_T\cup\overline\Omega\times\{0\}$, $T>0$.
\begin{definition}\label{v:sup}
We say that a nonnegative function $u(x,t)\in C^{2,1}(Q_T)\cap
C^{1,0}(Q_T\cup\Gamma_T)$ is a supersolution
of~(\ref{v:u})--(\ref{v:n}) in $Q_{T}$ if
        \begin{equation}\label{v:sup^u}
        u_{t}\geq\Delta u-c(x, t)u^{p},\;(x,t)\in Q_T,
        \end{equation}
        \begin{equation}\label{v:sup^g}
        \frac{\partial u(x,t)}{\partial\nu}\geq\int_{\Omega}{k(x, y, t)u^l(y, t) }\,dy,
        \; x \in \partial \Omega,\; 0 \leq t < T,
        \end{equation}
        \begin{equation}\label{v:sup^n}
            u(x,0)\geq u_{0}(x),\; x\in\Omega,
        \end{equation}
and $u(x,t)\in C^{2,1}(Q_T)\cap C^{1,0}(Q_T\cup\Gamma_T)$ is a
subsolution of~(\ref{v:u})--(\ref{v:n}) in $Q_{T}$ if $u\geq0$ and
it satisfies~(\ref{v:sup^u})--(\ref{v:sup^n}) in the reverse
order. We say that $u(x,t)$ is a solution of
problem~(\ref{v:u})--(\ref{v:n}) in $Q_T$ if $u(x,t)$ is both a
subsolution and a supersolution of~(\ref{v:u})--(\ref{v:n}) in
     $Q_{T}$.
\end{definition}

To prove the main results we use the positiveness of a solution
and the comparison principle which have been proved
in~\cite{Gladkov}.
\begin{theorem}\label{p:theorem:positive}
Let $u_0$ is a nontrivial function in $\Omega,$  $p \geq 1$ or
$c(x,t) \equiv 0.$ Suppose $u$ is a solution of
(\ref{v:u})--(\ref{v:n}) in $Q_T.$ Then $u>0$ in ${Q}_T \cup S_T.$
\end{theorem}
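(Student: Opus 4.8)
The plan is to establish positivity in two stages: first show that $u$ cannot vanish in the interior $\Omega$ for $t>0$ by a strong-maximum-principle argument applied to the linear parabolic operator obtained after freezing the nonlinearity, and then use Hopf's lemma on the boundary to propagate positivity to $\partial\Omega$. Since $u\geq 0$ is already known (it is a nonnegative solution by Definition~\ref{v:sup}), the task is to rule out the value zero.

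**First I would** treat the interior. On $Q_T$ the function $u$ satisfies $u_t-\Delta u = -c(x,t)u^p$. When $p\geq 1$ the function $s\mapsto s^p$ is locally Lipschitz on $[0,\infty)$, so on any subcylinder where $u$ is bounded we may write $-c(x,t)u^p = -b(x,t)u$ with $b(x,t)=c(x,t)u^{p-1}$ a bounded, nonnegative coefficient (when $c\equiv 0$ this is trivial with $b\equiv 0$). Hence $u$ is a nonnegative supersolution of the linear equation $w_t-\Delta w + b(x,t)w = 0$. Because $u_0$ is nontrivial and continuous, $u_0>0$ on some ball; by the strong maximum principle for this linear parabolic operator, $u>0$ throughout the connected interior $\Omega\times(0,T)$. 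One subtlety: a priori $u$ need only be locally bounded in $Q_T$, but that is exactly the regularity $C^{2,1}(Q_T)$ guarantees on compact subsets, so the coefficient $b$ is bounded on each compact subcylinder and the strong maximum principle applies locally, which is enough to conclude $u>0$ in all of $Q_T$ by a standard covering/connectedness argument.

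**Next I would** handle the boundary. Fix $x_0\in\partial\Omega$ and $t_0\in(0,T)$. Near $(x_0,t_0)$, $u>0$ in the interior and $u\geq 0$ on $S_T$. If $u(x_0,t_0)=0$, then $(x_0,t_0)$ is a minimum point of $u$ on $\overline{Q_{t_0}}$ attained on the lateral boundary, and since $u$ satisfies the same linear parabolic inequality with bounded coefficient near this point, Hopf's boundary-point lemma forces $\partial u(x_0,t_0)/\partial\nu<0$ (strict outward normal derivative at a boundary minimum). On the other hand, the boundary condition~(\ref{v:g}) gives $\partial u(x_0,t_0)/\partial\nu = \int_\Omega k(x_0,y,t_0)u^l(y,t_0)\,dy \geq 0$, since $k\geq 0$ and $u\geq 0$. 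This contradiction shows $u(x_0,t_0)>0$ for every $t_0\in(0,T)$, i.e. $u>0$ on $S_T$. Combined with the interior result, $u>0$ in $Q_T\cup S_T$.

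**The main obstacle** I expect is purely technical: making the freezing-of-the-nonlinearity step rigorous when $p\in(0,1)$, where $s^p$ is not Lipschitz at $0$ — but the hypothesis $p\geq 1$ (or $c\equiv 0$) is precisely there to exclude this case, so under the stated assumptions no extra care is needed. A secondary point is verifying that $u$ has enough regularity up to $S_T$ to apply Hopf's lemma; this is supplied by the requirement $u\in C^{1,0}(Q_T\cup\Gamma_T)$ in Definition~\ref{v:sup} together with interior parabolic regularity, so one applies Hopf's lemma in a slightly shrunken domain with a smooth piece of boundary, using the smoothness of $\partial\Omega$ to construct the interior ball needed for the barrier.
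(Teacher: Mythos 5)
Your argument is correct. Note that the paper itself does not prove this theorem: it is imported from the reference \cite{Gladkov}, so there is no in-text proof to compare against. Your route is the standard one for such statements: since $u\ge 0$ and, for $p\ge 1$ (or $c\equiv 0$), the absorption term can be written as $-b(x,t)u$ with $b=c\,u^{p-1}\ge 0$ bounded on $\overline\Omega\times[0,t_1]$ for each $t_1<T$ (because $u$ is continuous up to $\Gamma_T$), the strong minimum principle shows that a zero of $u$ at an interior point $(x_1,t_1)$ would force $u\equiv 0$ on $\Omega\times(0,t_1]$ and hence $u_0\equiv 0$, contradicting nontriviality; then a zero on $S_T$ is excluded by Hopf's boundary point lemma against the sign of the nonlocal flux $\int_\Omega k\,u^l\,dy\ge 0$. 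Both steps are legitimately applicable under the stated regularity ($u\in C^{2,1}(Q_T)\cap C^{1,0}(Q_T\cup\Gamma_T)$, smooth $\partial\Omega$ giving the interior ball condition), so the proposal stands as a complete proof.
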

\begin{theorem}\label{p:theorem:comp-prins}
Let $\overline{u}$ and $\underline{u}$ be a
 supersolution and a  subsolution of problem
(\ref{v:u})--(\ref{v:n}) in $Q_T,$ respectively. Suppose that
$\underline{u}(x,t)> 0$ or $\overline{u}(x,t) > 0$ in ${Q}_T\cup
\Gamma_T$ if $l < 1$. Then $ \overline{u}(x,t) \geq
\underline{u}(x,t) $ in ${Q}_T\cup \Gamma_T.$
\end{theorem}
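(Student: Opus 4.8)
The plan is to run an $L^{2}$-energy argument on the positive part of the difference $w=\underline u-\overline u$; a pointwise maximum-principle argument is delicate here because the normal derivative at a boundary point depends on the values of $w$ throughout $\Omega$. Subtracting the inequalities defining a subsolution for $\underline u$ from those defining a supersolution for $\overline u$ shows that $w$ satisfies
\begin{align*}
w_{t}&\leq\Delta w-c(x,t)\bigl(\underline u^{\,p}-\overline u^{\,p}\bigr)\quad\text{in }Q_{T},\\
\frac{\partial w}{\partial\nu}&\leq\int_{\Omega}k(x,y,t)\bigl(\underline u^{\,l}(y,t)-\overline u^{\,l}(y,t)\bigr)\,dy\quad\text{on }S_{T},\\
w(x,0)&\leq0\quad\text{in }\Omega.
\end{align*}
Since $Q_{T}\cup\Gamma_{T}=\overline\Omega\times[0,T)$, it suffices to show that $w_{+}=\max\{w,0\}$ vanishes identically there. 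Fix $T'\in(0,T)$. On the compact set $\overline{Q_{T'}}$ the functions $\underline u,\overline u$ are bounded, say by $M$, and $k$ is bounded, say by $K_{0}$; moreover, when $l<1$ the hypothesis guarantees that one of $\underline u,\overline u$ is bounded below on $\overline{Q_{T'}}$ by some $\delta>0$.

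First I would multiply the differential inequality by $w_{+}$, integrate over $\Omega$, and apply Green's identity, obtaining for $t\in(0,T')$
\begin{equation*}
\frac{1}{2}\frac{d}{dt}\int_{\Omega}w_{+}^{2}\,dx+\int_{\Omega}|\nabla w_{+}|^{2}\,dx\leq\int_{\partial\Omega}\frac{\partial w}{\partial\nu}\,w_{+}\,dx-\int_{\Omega}c(x,t)\bigl(\underline u^{\,p}-\overline u^{\,p}\bigr)w_{+}\,dx.
\end{equation*}
Because $s\mapsto s^{p}$ is nondecreasing on $[0,\infty)$, because $c\geq0$, and because $w_{+}$ is supported on $\{\underline u>\overline u\}$, the last integral is nonnegative and may be discarded. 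For the boundary term I would use $k\geq0$ together with the pointwise bound $(\underline u^{\,l}-\overline u^{\,l})^{+}\leq C_{1}w_{+}$ on $\overline{Q_{T'}}$; for $l\geq1$ this follows from the mean value theorem and monotonicity (with $C_{1}=lM^{l-1}$), and for $l<1$ from the concavity inequality $b^{l}-a^{l}\leq la^{l-1}(b-a)$ for $0\leq a<b$ combined with the lower bound $\delta$ (after a short case distinction according to whether the smaller of $\underline u,\overline u$ exceeds $\delta/2$, the complementary case using that then $w_{+}\geq\delta/2$). This gives
\begin{equation*}
\int_{\partial\Omega}\frac{\partial w}{\partial\nu}\,w_{+}\,dx\leq K_{0}C_{1}\Bigl(\int_{\partial\Omega}w_{+}\,dx\Bigr)\Bigl(\int_{\Omega}w_{+}\,dy\Bigr).
\end{equation*}

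Next I would estimate the right-hand side by the Cauchy--Schwarz inequality and the trace inequality $\|v\|_{L^{2}(\partial\Omega)}^{2}\leq\varepsilon\|\nabla v\|_{L^{2}(\Omega)}^{2}+C(\varepsilon)\|v\|_{L^{2}(\Omega)}^{2}$ applied to $v=w_{+}(\cdot,t)$, choosing $\varepsilon$ small enough that the term $\int_{\Omega}|\nabla w_{+}|^{2}\,dx$ is absorbed into the left-hand side. This yields $\frac{d}{dt}\int_{\Omega}w_{+}^{2}\,dx\leq C_{2}\int_{\Omega}w_{+}^{2}\,dx$ on $(0,T')$ with $C_{2}$ depending only on $T',M,\delta,K_{0}$ and $\Omega$. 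Since $w(x,0)\leq0$ gives $w_{+}(x,0)=0$ and $t\mapsto\int_{\Omega}w_{+}^{2}\,dx$ is continuous up to $t=0$, Gronwall's inequality forces $\int_{\Omega}w_{+}^{2}\,dx\equiv0$ on $[0,T']$, hence $\underline u\leq\overline u$ on $\overline{Q_{T'}}$; letting $T'\uparrow T$ completes the proof.

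The main obstacle is the boundary term when $l<1$: since $s\mapsto s^{l}$ fails to be Lipschitz near $0$, the linearization $(\underline u^{\,l}-\overline u^{\,l})^{+}\leq C_{1}w_{+}$ genuinely needs the positivity hypothesis, which is precisely why that assumption enters the statement (for $l\geq1$ no such assumption is required, matching the theorem). The remaining points — differentiating $t\mapsto\int_{\Omega}w_{+}^{2}\,dx$, the identity $\partial_{t}(w_{+}^{2})=2w_{+}w_{t}$ a.e., and the passage to the limit $t\to0^{+}$ in the Gronwall step — are routine given the $C^{2,1}(Q_{T})\cap C^{1,0}(Q_{T}\cup\Gamma_{T})$ regularity from Definition~\ref{v:sup}.
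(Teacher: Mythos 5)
Your argument is correct, but note that the paper does not actually prove Theorem~\ref{p:theorem:comp-prins}: it imports both Theorem~\ref{p:theorem:positive} and the comparison principle from the reference \cite{Gladkov}, where the proof is the pointwise one standard in this literature (perturb $\overline u-\underline u$ by $\varepsilon h(t)$, look at a first contact point, and use the boundary inequality together with the sign of the normal derivative there to reach a contradiction). Your route is genuinely different: an $L^2$ energy/Stampacchia estimate on $w_+=(\underline u-\overline u)_+$, with the nonlocal flux controlled by the Lipschitz (respectively, one-sided Lipschitz on $\{\min(\underline u,\overline u)\ge\delta/2\}$ plus the crude bound $w_+\ge\delta/2$ on the complement) linearization of $s\mapsto s^l$, Cauchy--Schwarz, the interpolated trace inequality, and Gronwall. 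The key structural points are all handled correctly: the absorption term has the right sign and is simply discarded (so no Lipschitz hypothesis on $s^p$ is needed), the positivity hypothesis enters exactly and only where $l<1$ forces it, and the compactness of $\overline\Omega\times[0,T']$ legitimately upgrades pointwise positivity to a uniform lower bound $\delta$. What the energy method buys is that it bypasses the first-contact-point analysis and any Hopf-lemma considerations, treating the nonlocal boundary term by a single integral estimate; what it costs is the extra functional-analytic machinery (trace inequality, a.e.\ differentiation of $w_+^2$) and a mild regularity gap you rightly flag: $\Delta w$ and $w_t$ are only continuous in the open cylinder, so Green's identity and the differentiation of $t\mapsto\int_\Omega w_+^2\,dx$ should be performed on interior exhaustions $\Omega_\rho$ (or on $[\epsilon,T']$) and passed to the limit using the continuity of $w$ and $\nabla w$ up to $\Gamma_T$; this is routine and does not affect the validity of the argument.
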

The proof of a global existence result relies on the continuation
principle and the construction of a supersolution. We suppose that
\begin{equation}\label{E0}
    c(x,t) > 0, \, x \in \overline{\Omega}, \, t \geq 0.
\end{equation}
\begin{theorem}\label{global}
Let $l \leq 1$ or $1 < l < p$ and (\ref{E0}) hold. Then problem
(\ref{v:u})--(\ref{v:n}) has a global solution for any initial
datum.
\end{theorem}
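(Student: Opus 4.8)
The plan is to construct an explicit global supersolution and then invoke Theorem~\ref{p:theorem:comp-prins} together with the continuation principle. Since the equation contains the damping term $-c(x,t)u^p$ which only helps, the main difficulty is controlling the boundary flux coming from the nonlocal term $\int_\Omega k(x,y,t)u^l\,dy$. I would look for a supersolution of the separated form $\overline u(x,t)=g(t)\varphi(x)$, where $\varphi$ is a fixed smooth positive function on $\overline\Omega$ with $\partial\varphi/\partial\nu>0$ on $\partial\Omega$ (for instance a suitable multiple of $1+|x|^2$, or the solution of $-\Delta\varphi=1$ with an appropriate Neumann-type normalization), and $g$ is a positive function to be chosen.

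First I would record the two inequalities that $\overline u$ must satisfy. The interior inequality $\overline u_t\ge\Delta\overline u-c(x,t)\overline u^p$ becomes $g'(t)\varphi(x)\ge g(t)\Delta\varphi(x)-c(x,t)g^p(t)\varphi^p(x)$; since we will drop the (favorable) last term, it suffices that $g'(t)\ge C_1 g(t)$ where $C_1=\sup_{\overline\Omega}\Delta\varphi/\inf_{\overline\Omega}\varphi$. The boundary inequality $\partial\overline u/\partial\nu\ge\int_\Omega k(x,y,t)\overline u^l\,dy$ becomes $g(t)\,\partial\varphi(x)/\partial\nu\ge g^l(t)\int_\Omega k(x,y,t)\varphi^l(y)\,dy$; using $\inf_{\partial\Omega}\partial\varphi/\partial\nu>0$ and the local boundedness of $k$, this reduces on any time interval $[0,T]$ to $g(t)\ge C_2(T)g^l(t)$, i.e. $g^{1-l}(t)\ge C_2(T)$ when $l<1$, which holds once $g$ is chosen large, and is automatic independent of the size of $g$ when $l=1$. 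Finally the initial condition requires $g(0)\varphi(x)\ge u_0(x)$, achieved by taking $g(0)$ large. Thus in the case $l\le1$ one takes $g(t)=A e^{C_1 t}$ with $A$ large enough to meet the initial and (on each finite interval) the boundary requirement; this $\overline u$ is a supersolution on every $Q_T$, it is positive, so Theorem~\ref{p:theorem:comp-prins} applies, and the continuation principle upgrades the local solution to a global one.

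The case $1<l<p$ with \eqref{E0} is where the damping term is genuinely needed, since now the boundary flux grows superlinearly in $g$ and cannot be absorbed by mere exponential growth. Here I would instead seek a \emph{time-independent} (or slowly growing) supersolution $\overline u=M\varphi(x)$ with $M$ large, using the interior inequality to trap the growth: we need $0\ge M\Delta\varphi(x)-c(x,t)M^p\varphi^p(x)$, i.e. $c(x,t)M^{p-1}\varphi^p(x)\ge\Delta\varphi(x)$, which by \eqref{E0} (so $c\ge c_0>0$ on $\overline\Omega\times[0,\infty)$, after noting \eqref{E0} gives positivity that combined with continuity and a uniform-in-time lower bound — or, if only pointwise positivity is assumed, by working on finite intervals and iterating) holds for $M$ large. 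The boundary inequality needs $M\,\partial\varphi/\partial\nu\ge M^l\int_\Omega k(x,y,t)\varphi^l(y)\,dy$; since $l>1$, this is the constraint that \emph{caps} $M$ from above on a given interval unless $k$ is bounded in time, so more care is required — one option is to let $\overline u$ decay, $\overline u=M\varphi(x)h(t)$ with $h$ decreasing, trading interior slack for boundary slack, or to exploit that for $l<p$ the powers can be balanced. The main obstacle is precisely this simultaneous satisfaction of the superlinear boundary inequality and the interior inequality as $t\to\infty$: one must choose the profile $\varphi$ and the temporal factor so that the $-c(x,t)u^p$ term dominates $u^l$ on the boundary contribution after integrating, and this is the step I expect to require the genuine hypothesis $1<l<p$ together with \eqref{E0}. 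Once a valid global supersolution is in hand, positivity of $\overline u$ lets us invoke Theorem~\ref{p:theorem:comp-prins}, and global existence follows from the continuation principle exactly as in the first case.
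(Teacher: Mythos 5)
Your treatment of $l<1$ is essentially workable, but already at $l=1$ there is an error: for $\overline u=g(t)\varphi(x)$ the factor $g$ cancels from both sides of the boundary inequality, leaving $\partial\varphi/\partial\nu\ge\int_\Omega k(x,y,t)\varphi(y)\,dy$ on $\partial\Omega$, which is a genuine constraint on $\varphi$ (invariant under rescaling $\varphi\mapsto\lambda\varphi$) and fails for a generic positive profile such as a multiple of $1+|x|^2$ once $k$ is large; it is not ``automatic''. The paper avoids this by taking $\overline u=C e^{\mu t}/(a\varphi+1)$ with $\varphi$ the first Dirichlet eigenfunction: since $\varphi=0$ on $\partial\Omega$ and $\partial\varphi/\partial\nu<0$ there, the ratio of the normal derivative of $\overline u$ to its interior values grows like $a$, so the boundary inequality is enforced by choosing $a$ large, and $l\le1$ with $C\ge1$, $\mu>0$ gives $C^l e^{l\mu t}\le Ce^{\mu t}$ so that this choice of $a$ is independent of $C$.

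The substantive gap is the case $1<l<p$, which you do not actually prove: you describe the obstruction and propose ansätze ($M\varphi(x)$, or $M\varphi(x)h(t)$ with $h$ decreasing) that cannot work for arbitrary initial data. For any bounded separated supersolution the boundary inequality at $t=0$ caps $Mh(0)$ from above (since $l>1$), while $\overline u(x,0)\ge u_0(x)$ forces $Mh(0)$ to be arbitrarily large; letting $h$ decay is irrelevant at $t=0$. The missing idea is a supersolution that is \emph{singular near the boundary}: in normal coordinates $(\overline x,s)$ the paper takes $\overline u=\left[(\alpha s+\varepsilon)^{-\gamma}-\omega^{-\gamma}\right]_+^{\beta/\gamma}+A$ with $\max\{1/l,\,2/(p-1)\}<\beta<2/(l-1)$, extended by the constant $A\ge\sup_\Omega u_0$ away from the boundary. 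The absorption term $c\,\overline u^{\,p}$, bounded below via (\ref{E0}), dominates $\Delta\overline u$ in the interior because $\beta\ge 2/(p-1)$ makes $\varepsilon^{-\beta p}$ beat $\varepsilon^{-\beta-2}$; the constant $A$ absorbs arbitrary $u_0$; and on the boundary $\partial\overline u/\partial\nu\sim\varepsilon^{-\beta-1}$ dominates $\int_\Omega k\,\overline u^{\,l}\,dy\sim\varepsilon^{-(\beta l-1)}$ for small $\varepsilon$ precisely because $\beta<2/(l-1)$ --- and such a $\beta$ exists only because $l<p$. Without this (or an equivalent) construction, the theorem is not established in exactly the case that uses the hypotheses $1<l<p$ and (\ref{E0}).
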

\begin{proof}
In order to prove the existence of global solutions we construct a
suitable explicit supersolution of~(\ref{v:u})--(\ref{v:n}) in
$Q_T$ for any positive $T.$ Suppose at first that $l \leq 1.$
Since $k(x,y,t)$ is a continuous function there exists a constant
$K>0$ such that
\begin{equation}\label{K}
k(x,y,t)\leq K
\end{equation}
in $\partial\Omega\times Q_T.$ Let $\lambda_1$ be the first
eigenvalue of the following problem
\begin{equation*}
    \begin{cases}
        \Delta\varphi+\lambda\varphi=0,\;x\in\Omega,\\
        \varphi(x)=0,\;x\in\partial\Omega,
    \end{cases}
\end{equation*}
and $\varphi(x)$ be the corresponding eigenfunction with
$\sup\limits_{\Omega}\varphi(x)=1$. It is well known
$\varphi(x)>0$ in $\Omega$ and $\max\limits_{\partial\Omega}
\partial\varphi(x)/\partial\nu < 0.$

Now we show that
$$
\overline u (x,t) = \frac{C\exp (\mu t)}{a \varphi (x) + 1}
$$
is a supersolution of~(\ref{v:u})--(\ref{v:n}) in $Q_T,$ where
constants $C,\mu$ and $a$ are chosen to satisfy the following
inequalities:
\begin{equation*}
a\geq \max \left\{ K \int_\Omega \frac{dy}{(\varphi (y) + 1)^l}
\max_{\partial \Omega} \left(-\frac{\partial \varphi}{\partial
\nu} \right)^{-1}, 1 \right\},
\end{equation*}
\begin{equation*}
C \geq \max \{ \sup_\Omega (a \varphi (x) + 1) u_0 (x), 1 \},
\quad \mu \geq \lambda_1 + 2 a^2 \sup_\Omega \frac{|\nabla \varphi
|^2 } {(a \varphi (x) + 1)^2}.
\end{equation*}
Indeed, it is easy to check that
\begin{equation}\label{E1}
\overline u_t - \Delta\overline u + c(x,t)\overline u^p \geq
\left( \mu - \frac{a\lambda_1\varphi}{(a \varphi (x) + 1)^2} - 2
a^2 \sup_\Omega \frac{|\nabla \varphi |^2 } {(a \varphi (x) +
1)^2} \right) \overline u  \geq 0
\end{equation}
for $(x,t) \in Q_T,$
\begin{eqnarray}\label{E2}
\frac{\partial \overline u}{\partial\nu} &=& a C \exp (\mu t)
\left(-\frac{\partial \varphi}{\partial \nu} \right) \geq  K C^l
\exp (l\mu t) \int_\Omega \frac{dy}{(\varphi (y) + 1)^l} \nonumber \\
&\geq& \int_{\Omega} k(x,y,t)\overline u^l(y,t) \,dy
\end{eqnarray}
for $(x,t) \in S_T$ and
\begin{equation}\label{E3}
\overline u(x,0)\geq u_0(x)
\end{equation}
for $x \in \Omega.$ It follows from (\ref{E1})--(\ref{E3}) that
problem (\ref{v:u})--(\ref{v:n}) has a global solution for any
initial datum.

Suppose now that $ 1<l<p$ and (\ref{E0}) holds. By (\ref{E0}) we
have $c(x,t)\geq \underline{c}$ in $Q_T,$ where $\underline{c}$ is
some positive constant.

To construct a  supersolution we use the change of variables in a
neighborhood of $\partial \Omega$ as in \cite{CPE}. Let $\overline
x$ be a point in $\partial \Omega.$ We denote by $\widehat{n}
(\overline x)$ the inner unit normal to $\partial \Omega$ at the
point $\overline x.$ Since $\partial \Omega$ is smooth it is well
known that there exists $\delta >0$ such that the mapping $\psi
:\partial \Omega \times [0,\delta] \to \mathbb{R}^n$ given by
$\psi (\overline x,s)=\overline x +s\widehat{n} (\overline x)$
defines new coordinates ($\overline x,s)$ in a neighborhood of
$\partial \Omega$ in $\overline\Omega.$ A straightforward
computation shows that, in these coordinates, $\Delta$ applied to
a function $g(\overline x,s)=g(s),$ which is independent of the
variable $\overline x,$ evaluated at a point $(\overline x,s)$ is
given by
\begin{equation}\label{E:1.102}
\Delta g(\overline x,s) = \frac{\partial^2g}{\partial s^2}
(\overline x,s) - \sum_{j=1}^{n-1} \frac{H_j (\overline x)}{1-s
H_j (\overline x)}\frac{\partial g}{\partial s} (\overline x,s),
\end{equation}
where $H_j (\overline x)$ for $j=1,...,n-1,$ denotes the principal
curvatures of $\partial\Omega$ at $\overline x.$

For points in $Q_{\delta,T}=\partial \Omega \times [0,
\delta]\times [0,T]$ of coordinates $(\overline x,s,t)$ define
\begin{equation}\label{E:4.1}
\overline u (\overline x,s,t)= \left[ (\alpha s +
\varepsilon)^{-\gamma} -
\omega^{-\gamma}\right]_+^\frac{\beta}{\gamma} + A,
\end{equation}
where $\alpha >0, \,$ $0<\varepsilon<\omega<\alpha\delta, \,$
$\max\{1/l,2/(p-1)\} < \beta < 2/(l-1),\,$ $0<\gamma<\beta/2,\,$
$A \ge \sup_{\Omega} u_0(x),\,$ $\sigma_+=\max\{\sigma,0\}.$ For
points in $\overline{Q_T}\setminus Q_{\delta,T}$ we put $\overline
u (\overline x,s,t)= A.$ It has been showed in
\cite{GladkovGuedda} that
\begin{equation*}
\overline u_t - \Delta\overline u + c(x,t)\overline u^p \geq 0,
\quad (x,t) \in Q_T
\end{equation*}
for small $\varepsilon$ and large $A.$

Now we show that
\begin{equation}\label{E:4.6}
\frac{\partial \overline u}{\partial\nu} (\overline x,0,t) \geq
\int_{\Omega} k(x,y,t) \overline u^l(\overline x,s,t) \, dy, \quad
(x,t) \in Q_T
\end{equation}
for a suitable choice of $\varepsilon.$ To estimate the integral
$I$ in the right hand side of (\ref{E:4.6}) we shall use the
change of variables in a neighborhood of $\partial\Omega.$ Let
$$
 \overline J= \sup_{0< s< \delta} \int_{\partial\Omega}
|J(\overline y,s)| \, d\overline y,
$$
where $J(\overline y,s)$ is Jacobian of the change of variables.
Then we have
\begin{eqnarray*}
I & \leq & 2^{l-1} K \int_{\Omega} \left[ (\alpha s +
\varepsilon)^{-\gamma} - \omega^{-\gamma}\right]_+^\frac{\beta
l}{\gamma} \, dy + 2^{l-1} K A^l |\Omega|\\
&\leq & 2^{l-1} K  \overline J \int_{0}^{(\omega -
\varepsilon)/\alpha} \left[ (\alpha s + \varepsilon)^{-\gamma} -
\omega^{-\gamma}\right]^\frac{\beta
l}{\gamma} \, ds + 2^{l-1} K A^l |\Omega|\\
&\leq & \frac{2^{l-1} K  \overline J}{\alpha(\beta l-1)} \left[
\varepsilon^{-(\beta l-1)} - \omega^{-(\beta l-1)}\right] +
2^{l-1} K A^l |\Omega|,
\end{eqnarray*}
where $K$ was defined in (\ref{K}), $|\Omega|$ is Lebesque measure
of $\Omega.$ On the other hand, since
$$
\frac{\partial \overline u}{\partial\nu} (\overline x,0,t) = -
\frac{\partial \overline u}{\partial s} (\overline x,0,t) = \alpha
\beta \varepsilon^{-\gamma -1} \left[ \varepsilon^{-\gamma} -
\omega^{-\gamma}\right]_+^\frac{\beta-\gamma}{\gamma},
$$
the inequality (\ref{E:4.6}) holds if $\varepsilon$ is small
enough and hence by Theorem~\ref{p:theorem:comp-prins} we get
$$
 u(x,t) \leq \overline u (\overline x,s,t) \,\,\,  \textrm{in} \,\,\,
 \overline{Q}_T.
$$
\end{proof}
\begin{remark}\label{Rem6}
Let
$$
\underline\lambda=\frac{\inf_{\Omega\times(0,+\infty)}c(x,t)}
{\sup_{\partial\Omega\times\Omega\times(0,+\infty)}k(x,y,t)}.
$$
Note that under $\beta = 2/(l-1)$ and a suitable choice of
$\alpha$ in (\ref{E:4.1}) the same proof holds if $l=p>1$ and
$\underline\lambda$ is large enough and consequently a solution of
problem (\ref{v:u})--(\ref{v:n}) is global.
\end{remark}
Now we shall prove finite time blow-up result. We suppose that
\begin{equation}\label{E:4.7}
k(x,y,t_0)>0, \,\,\,  x \in \partial\Omega,\, y\in
\partial\Omega.
\end{equation}
\begin{theorem}\label{Th10} Let $l>\max\{1, p\}$ and (\ref{E:4.7})
hold with $t_0 \geq 0$ if $p \leq 1$ and with $t_0 = 0$ if $p >
1.$ Then there exist solutions of (\ref{v:u})--(\ref{v:n}) with
finite time blow-up.
\end{theorem}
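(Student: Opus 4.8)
The plan is to construct, for a suitable finite $T>t_0$, an explicit subsolution of (\ref{v:u})--(\ref{v:n}) on $\Omega\times(t_0,T)$ supported in a thin neighbourhood of $\partial\Omega$ whose supremum becomes infinite as $t\to T^{-}$, and then to invoke Theorem~\ref{p:theorem:comp-prins} --- which, since $l>1$, needs no positivity hypothesis. The subsolution is a blowing-up version of the supersolution (\ref{E:4.1}) from the proof of Theorem~\ref{global}, with the parameter $\varepsilon$ now a decreasing function of $t$ vanishing at $T$. By (\ref{E:4.7}) and continuity of $k$, there are constants $k_0,\rho,\eta>0$ with $k(x,y,t)\ge k_0$ for $x\in\partial\Omega$, $\mathrm{dist}(y,\partial\Omega)\le\rho$ and $t\in[t_0,t_0+\eta]$. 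Using the boundary coordinates $(\overline x,s)$ and formula (\ref{E:1.102}), fix a layer of width $\delta$, with $\delta$ small enough that $\delta<\rho$, $1-sH_j\ge1/2$ there, and the curvature terms in (\ref{E:1.102}) are bounded by a small constant $H^{*}$. Choose $\beta>2/(l-1)$ (and in addition $\beta<2/(p-1)$ if $p>1$, which is possible because $l>p$), then $\gamma$ with $\max\{0,\beta(1-p)/2\}<\gamma<\beta/2$, a large $a>0$, $\omega\in(0,a\delta)$, a small $\varepsilon_0\in(0,\omega)$, and finally $T\in(t_0,t_0+\eta]$ together with a smooth decreasing $\varepsilon:[t_0,T)\to(0,\varepsilon_0]$ with $\varepsilon(t_0)=\varepsilon_0$, $\varepsilon(t)\to0$ as $t\to T^{-}$ and $|\varepsilon'|$ small. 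Define, in the layer,
\[
\underline u(\overline x,s,t)=\big[(as+\varepsilon(t))^{-\gamma}-\omega^{-\gamma}\big]_+^{\beta/\gamma},
\]
and $\underline u\equiv0$ elsewhere. Since $\beta/\gamma>2$, $\underline u$ has the required regularity; its support lies in $\{\mathrm{dist}(\cdot,\partial\Omega)<\rho\}$, and $\sup_\Omega\underline u(\cdot,t)=[\varepsilon(t)^{-\gamma}-\omega^{-\gamma}]^{\beta/\gamma}\to\infty$ as $t\to T^{-}$.

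The verification of the subsolution inequalities is the heart of the matter. Off the support they are trivial. In the layer, with $\xi=as+\varepsilon(t)$ and $\phi=\xi^{-\gamma}-\omega^{-\gamma}$, a computation using (\ref{E:1.102}) gives $\underline u_s<0$, $\underline u_{ss}>0$ and
\[
\Delta\underline u\ge\beta a^{2}(\gamma+1)\xi^{-\gamma-2}\phi^{\beta/\gamma-1}+\beta a^{2}(\beta-\gamma)\xi^{-2\gamma-2}\phi^{\beta/\gamma-2}-H^{*}\beta a\,\xi^{-\gamma-1}\phi^{\beta/\gamma-1},
\]
while $\underline u_t=\beta|\varepsilon'(t)|\xi^{-\gamma-1}\phi^{\beta/\gamma-1}$ and $c(x,t)\underline u^{p}\le\overline C\phi^{\beta p/\gamma}$, $\overline C=\sup_{\overline\Omega\times[t_0,T]}c$. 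The first two terms on the right are comparable and of order $\varepsilon^{-\beta-2}$ near $s=0$; with $|\varepsilon'|$ and $H^{*}$ small they absorb $\underline u_t$ and the last term, and, using $\gamma>\beta(1-p)/2$ (resp. $\beta<2/(p-1)$ when $p>1$) and $a$ large, they also dominate $c\underline u^{p}$; hence $\underline u_t\le\Delta\underline u-c\underline u^{p}$ in $Q_T$. On $\partial\Omega$,
\[
\frac{\partial\underline u}{\partial\nu}(\overline x,0,t)=\beta a\,\varepsilon(t)^{-\gamma-1}\big(\varepsilon(t)^{-\gamma}-\omega^{-\gamma}\big)^{\beta/\gamma-1}\le\beta a\,\varepsilon(t)^{-\beta-1},
\]
whereas, changing variables near $\partial\Omega$, using $k\ge k_0$ on the support, and $\beta l>1$, one gets $\int_\Omega k(x,y,t)\underline u^{l}(y,t)\,dy\ge c_1\varepsilon(t)^{1-\beta l}$ for some $c_1>0$ when $\varepsilon_0$ is small; since $2-\beta(l-1)<0$, $\varepsilon^{1-\beta l}$ dominates $\varepsilon^{-\beta-1}$ as $\varepsilon\to0$, so the boundary inequality holds. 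Thus $\underline u$ is a subsolution of (\ref{v:u})--(\ref{v:n}) on $\Omega\times(t_0,T)$, and $\sup_\Omega\underline u(\cdot,t_0)=:M_0$.

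To conclude, pick admissible initial data. If $t_0=0$ --- in particular whenever $p>1$ --- take admissible $u_0$ with $u_0\ge\underline u(\cdot,0)$ on $\Omega$. If $t_0>0$, then necessarily $p\le1$: take admissible $u_0$ with $u_0\ge M_1$ on $\Omega$, where $M_1$ is so large that the solution $\underline v$ of $\underline v'=-\overline C_0\underline v^{p}$, $\underline v(0)=M_1$ (with $\overline C_0=\sup_{\overline\Omega\times[0,t_0]}c$) stays positive on $[0,t_0]$ and satisfies $\underline v(t_0)\ge M_0$; since $\underline v$ is a constant-in-space subsolution of (\ref{v:u})--(\ref{v:n}) on $\Omega\times(0,t_0)$, Theorem~\ref{p:theorem:comp-prins} gives $u(\cdot,t_0)\ge\underline v(t_0)\ge M_0\ge\underline u(\cdot,t_0)$, provided a solution $u$ exists up to $t_0$ (otherwise it has already blown up and we are done). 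In either case any solution $u$ is a supersolution of (\ref{v:u})--(\ref{v:n}) on $\Omega\times(t_0,T)$ dominating $\underline u$ at time $t_0$, so Theorem~\ref{p:theorem:comp-prins} yields $u\ge\underline u$ as long as $u$ exists; since $\sup_\Omega\underline u(\cdot,t)\to\infty$ as $t\to T^{-}$, the maximal existence time of $u$ is at most $T<\infty$, i.e. $u$ blows up in finite time.

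The main obstacle is the simultaneous choice of the parameters in the middle step. The interior diffusion term, which blows up inside the layer, must dominate both $\underline u_t$ and the absorption $c\underline u^{p}$ --- the latter forcing $\beta<2/(p-1)$ when $p>1$ --- while the normal derivative on $\partial\Omega$, which also blows up, must be dominated by the nonlocal flux, which forces $\beta>2/(l-1)$; the compatibility of these two requirements is precisely the hypothesis $l>\max\{1,p\}$. The restriction $t_0=0$ for $p>1$ reflects the impossibility of propagating large initial data forward in time under superlinear absorption. Finally, one has to secure the $C^{2,1}$ regularity of $\underline u$ across the moving edge of its support (guaranteed by $\gamma<\beta/2$) and to construct admissible large initial data.
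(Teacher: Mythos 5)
Your argument is correct, and it reaches the conclusion by a genuinely different construction than the paper's. The paper does not build a subsolution of the full problem: it introduces the auxiliary problem (\ref{E:4.13}) posed on a boundary layer $\Omega_\gamma$, takes the traveling profile $\psi(s,t)=(t_2+s-t)^{-\sigma}$ with $\sigma>2/(l-1)$ as a subsolution of that auxiliary problem, and must therefore (i) prove a separate comparison principle for (\ref{E:4.13}) and (ii) control $u$ from below on the inner lateral boundary $\partial\Omega_\gamma\setminus\partial\Omega$ for all $t\in[t_0,t_2]$ via the spatially constant subsolution $w(t)$, which is what forces the size conditions (\ref{E:4.15b})--(\ref{E:4.15c}). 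You instead recycle the barrier (\ref{E:4.1}) from the global existence proof with the cutoff parameter $\varepsilon$ made time-dependent and vanishing at $T$; since your profile is compactly supported inside the layer and extends by zero, it is a subsolution of (\ref{v:u})--(\ref{v:n}) on all of $\Omega$, so you need only Theorem~\ref{p:theorem:comp-prins} itself and a one-time comparison at $t=t_0$ (for which you use the same ODE subsolution as the paper when $t_0>0$). The two constructions hinge on the identical exponent balance --- boundary flux $\varepsilon^{1-\beta l}$ beating the normal derivative $\varepsilon^{-\beta-1}$ because $\beta(l-1)>2$, and diffusion beating absorption because $\beta(p-1)<2$, compatible precisely when $l>\max\{1,p\}$ --- so neither is stronger, but yours is structurally cleaner (no auxiliary problem, no inner-boundary matching), at the price of a slightly heavier interior computation near the free boundary of the support, where your conditions $\gamma<\beta/2$ (for $C^{2,1}$ regularity) and $\gamma>\beta(1-p)/2$ when $p<1$ do the work. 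Two small points to tidy up: the curvature sum in (\ref{E:1.102}) cannot be made small --- it is fixed by the geometry --- but its contribution is still absorbed because it carries an extra factor $\xi\leq 2a\delta$ relative to the leading term of $\underline u_{ss}$, so smallness of $\delta$ (with, say, $a\delta$ held bounded so that large $a$ also dominates $c\,\underline u^{p}$) is what you should invoke; and you should note, as the paper implicitly does, that admissible initial data satisfying the compatibility condition and dominating the required profile exist.
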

\begin{proof}
At first we suppose that $p\leq 1,\,$ $l>1$ and (\ref{E:4.7})
holds with $t_0 \geq 0.$ To prove the theorem we construct a
subsolution of an auxiliary problem which blows up in finite time.
First of all we get a lower bound for solutions of
(\ref{v:u})--(\ref{v:n}) with positive initial data. We denote
\begin{equation}\label{E:5.1c}
\overline c (t)=\sup\limits_{\Omega} c(x,t).
\end{equation}
It is not difficult to check that
\begin{eqnarray*}
w(t) = \left\{ \begin{array}{ll} \left[ A^{1-p}  - (1-p) \int _0^t
\overline c (\tau)\, d\tau \right]^{1/(1-p)}
\,\,\,&\textrm{for} \,\,\, 0 <p<1,\\
A  \exp \left[ - \int _0^t \overline c (\tau)\, d\tau \right]
\,\,\,&\textrm{for} \,\,\, p=1
\end{array} \right.
\end{eqnarray*}
is a subsolution of (\ref{v:u})--(\ref{v:n}) in $Q_T$ for any
$T>0$ if
\begin{equation}\label{E:4.12a}
 u_0(x) \geq A>0.
\end{equation}
Then by Theorem~\ref{p:theorem:comp-prins} we have
\begin{equation}\label{E:4.12}
u(x,t) \geq  w(t)  \,\,\, \textrm{for} \,\,\, x \in \overline
\Omega \,\,\,\textrm{and} \,\,\,  t \geq 0.
\end{equation}
Consider the change of variables in a neighborhood of
$\partial\Omega$ as in Theorem~\ref{global}. Set $\Omega_\gamma =
\{ (\overline x,s): \overline x \in \partial\Omega, 0<s<\gamma
\}.$ By (\ref{E:4.7}) we have
\begin{equation}\label{E:4.7a}
k(x,y,t) \geq k_1, \,\,\,  x \in \partial\Omega,\, y \in
\Omega_\gamma, \, t_0 < t < t_1
\end{equation}
for some positive $k_1,\,$ $\gamma$ and $t_1 > t_0.$

Let us consider the following initial boundary value problem:
\begin{eqnarray}\label{E:4.13}
\left\{
\begin{array}{ll}
v_{t}=\Delta v - c(x,t) v^p  \,\,\, \,\,\,& \textrm{for} \,\,\,
x \in \Omega_\gamma, \,\,\, t_0<t<t_2,   \\
\frac{\partial v(x,t)}{\partial \nu} = \int_{\Omega_\gamma}
k(x,y,t) v^l (y,t) \, dy \,\,\,\,\,\,\,\,& \textrm{for}\,\,\, x
\in
\partial\Omega, \,\,\,  t_0<t<t_2, \\
v(x,t)= u(x,t) \,\,\,\,\,\,& \textrm{for}\,\,\, x \in
\partial\Omega_\gamma\setminus\partial\Omega, \,\,\, t_0<t<t_2, \\
v(x,t_0)= u (x,t_0) \,\,\,\,\,\,& \textrm{for}\,\,\, x \in
\Omega_\gamma,
\end{array} \right.
\end{eqnarray}
where $\nu$ is unit outward normal on $\partial\Omega,$ $u(x,t)$
is a solution of (\ref{v:u})--(\ref{v:n}), $t_2 \in (t_0, t_1)$
and will be chosen later. We can define the notions of a
supersolution and a subsolution of (\ref{E:4.13}) in a similar way
as in Definition~\ref{v:sup}. We shall use a comparison principle
for a subsolution and a supersolution of (\ref{E:4.13}) which can
be proved analogously to Theorem~\ref{p:theorem:comp-prins}. It is
easy to see that $u(x,t)$ is a supersolution of (\ref{E:4.13}) in
$Q(\gamma,t_0,t_2) =\Omega_\gamma \times (t_0, t_2).$

We define
\begin{equation}\label{E:4.14}
\psi(s,t) = (t_2 + s-t)^{-\sigma},
\end{equation}
where $\sigma >2/(l-1)$ and show that $\psi(s,t)$ is a subsolution
of (\ref{E:4.13}) in $Q(\gamma,t_0,t_2)$ under suitable choice of
$t_2$ and $\gamma.$ It is obvious, $\psi(0,t) \to \infty$ as $t
\to t_2.$

For $0 < s <\gamma$ and small $\gamma$ we have
\begin{equation}\label{E:4.14a}
\left|\sum_{j=1}^{n-1} \frac{H_j (\overline x)}{1-s H_j (\overline
x)} \right| \leq C.
\end{equation}
Using (\ref{E:1.102}), (\ref{E:4.14}), (\ref{E:4.14a}) we find
that
\begin{eqnarray*}
-\psi_t + \Delta \psi - c(x,t) \psi^p &\geq& (t_2  +
s-t)^{-\sigma-2} \left\{ \sigma (\sigma + 1) - \sigma (C + 1) (t_2
- t_0 + \gamma) \right.\\
 &-& \sup_{(t_0, t_2)} \overline c (t) (t_2 - t_0 +
\gamma)^{\sigma + 2 - \sigma p} \left. \right\} \geq 0
\end{eqnarray*}
in $Q(\gamma,t_0,t_2)$ if we take $\gamma$ and $t_2 -t_0$ small
enough. Now we prove that
\begin{equation*}\label{E:4.15}
\frac{\partial \psi}{\partial \nu} (0,t) \leq \int_{\Omega_\gamma}
k(x,y,t) \psi^l (s,t) \, dy \,\,\, \textrm{for}\,\,\, x \in
\partial\Omega, \, t_0<t<t_2.
\end{equation*}
To do this, we use the change of variables in a neighborhood of
$\partial\Omega.$ Let
$$
\underline J= \inf_{0< s< \gamma} \int_{\partial\Omega}
|J(\overline y,s)| \, d\overline y,
$$
where $J(\overline y,s)$ is Jacobian of the change of variables.
By virtue of (\ref{E:4.7a}), (\ref{E:4.14}) we have
\begin{eqnarray*}
\frac{\partial \psi}{\partial \nu} (0,t) &-& \int_{\Omega_\gamma}
k(x,y,t) \psi^l (s,t) \, dy  \\
&\leq& \sigma (t_2 - t)^{-\sigma -1} - k_1 \underline J
\int_0^\gamma (t_2 + s - t)^{-\sigma l} ds  \\
&\leq& \sigma (t_2 - t)^{-\sigma -1} - k_1 \underline J \frac{(t_2
- t)^{-\sigma l + 1}}{\sigma l - 1} \left[ 1 - \left(
\frac{t_2}{t_2 + \gamma} \right)^{\sigma l - 1} \right] \leq 0
\end{eqnarray*}
for $x \in \partial\Omega, \, t_0<t<t_2$ and small enough $t_2 -
t_0.$

We suppose now that
\begin{equation}\label{E:4.15a}
\gamma < t_2 - t_0,
\end{equation}
\begin{equation}\label{E:4.15b}
A \geq \left[ (1-p) \int _0^{t_1} \overline c (\tau)\, d\tau +
\gamma^{-\sigma (1-p)} \right]^{1/(1-p)} \,\,\, \textrm{for}
\,\,\, 0 <p<1,
\end{equation}
\begin{equation}\label{E:4.15c}
A \geq \gamma^{-\sigma} \exp \left[ \int _0^{t_1} \overline c
(\tau) \right] \,\,\, \textrm{for} \,\,\, p = 1.
\end{equation}
Due to (\ref{E:4.12a}), (\ref{E:4.12}), (\ref{E:4.15a}) --
(\ref{E:4.15c}) we have
$$
\psi(s,t) \leq  u(x,t) \,\,\, \textrm{for} \,\,\, x \in
\Omega_\gamma,\, t=t_0 \,\,\,\textrm{and} \,\,\, x \in
\partial\Omega_\gamma\setminus\partial\Omega, \,\,\, t_0\leq t\leq t_2.
$$

Comparing $u(x,t)$ and $\psi(s,t)$ in $Q(\gamma,t_0,t_2)$
 we prove the theorem for $p\leq 1,\,$ $l>1.$

Let  $l>p>1$ and (\ref{E:4.7}) hold with $t_0 = 0.$ We denote $c_1
= \sup_{Q_{t_1}} c (x,t)$ and suppose that
\begin{equation*}
\max \left\{ \frac {1}{p-1},\frac{2}{l-1} \right\} < \sigma <
\frac{2}{p-1}, \, u_0(x) \geq \max \left\{[t_2 (p-1) c_1
]^{-\frac{1}{p-1}}, t_2^{-\sigma} \right\},
\end{equation*}
where $t_2 \in (0, t_1)$ and will be chosen later. It is not
difficult to check that
$$
w(t) = [(p-1) c_1 (t+t_2)]^{-\frac{1}{p-1}}
$$
is a subsolution of (\ref{v:u})--(\ref{v:n}) in $Q_{t_2}.$ Then by
Theorem~\ref{p:theorem:comp-prins} we have
\begin{equation*}
w(t) \leq  u(x,t)  \,\,\, \textrm{for} \,\,\, x \in \overline
\Omega \,\,\,\textrm{and} \,\,\, 0 \leq t \leq t_2.
\end{equation*}
In the same way as in a previous case we can show that $\psi(s,t)$
is a subsolution of (\ref{E:4.13}) in $Q(\gamma,t_0,t_2)$ with
$t_0 = 0$ for small values of $\gamma$ and
\begin{equation*}
t_2 \leq \min \left\{ t_1, \frac{\gamma^{\sigma (p-1)}}{2(p-1)
c_1} \right\}.
\end{equation*}
\end{proof}
\begin{remark}\label{Rem7}
We put
$$
\overline\lambda=\frac{\sup_{\partial\Omega}c(x,0)}
{\inf_{\partial\Omega\times\partial\Omega}k(x,y,0)}
$$
and consider
\begin{equation}\label{E:4.15d}
\psi(s,t) = (t_2 + \omega s-t)^{-2/(p-1)}, \, \omega >0
\end{equation}
instead of (\ref{E:4.14}). Under a suitable choice of $\omega$ in
(\ref{E:4.15d}) the same proof holds for $l=p>1$ if
$\overline\lambda$ is small enough and hence there exist solutions
of (\ref{v:u})--(\ref{v:n}) with finite time blow-up.
\end{remark}

\section{Blow-up of all nontrivial solutions}\label{all}
In this section we find the conditions which guarantee blow-up in
finite time of all nontrivial solutions of
(\ref{v:u})--(\ref{v:n}).

First we prove that for $p<1$ and $l>1$ no blow-up of all
nontrivial solutions of (\ref{v:u})--(\ref{v:n}) if
 \begin{equation}\label{5.0}
\inf_\Omega c(x,0) > 0.
    \end{equation}
\begin{theorem}
Let  $p<1,\,$ $l>1\,$ and (\ref{5.0}) hold. Then problem
(\ref{v:u})--(\ref{v:n}) has global solutions for small initial
data.
\end{theorem}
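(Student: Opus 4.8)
The plan is to construct, for data $u_0$ of sufficiently small $C(\overline\Omega)$ norm, a global supersolution of (\ref{v:u})--(\ref{v:n}) and then to conclude by the comparison principle (Theorem~\ref{p:theorem:comp-prins}) together with the local existence and continuation results of~\cite{Gladkov}. Since here $l>1$ and $p<1$, the two nonlinearities play the opposite roles to those in the blow-up theorems above: for small $u$ the nonlocal boundary flux $\int_\Omega k\,u^l\,dy$ is negligible compared with the linear boundary term, while the absorption $c\,u^p$ dominates the diffusion. Accordingly I look for a \emph{small} time-independent supersolution instead of a large exponentially growing one.

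Reusing the first Dirichlet eigenfunction $\varphi$ from the proof of Theorem~\ref{global} (so $\varphi>0$ in $\Omega$, $\sup_\Omega\varphi=1$, $\max_{\partial\Omega}\partial\varphi/\partial\nu<0$), fix $a\ge 1$ and set $\overline u(x)=\delta\,(a\varphi(x)+1)^{-1}$ with a small $\delta>0$ to be chosen. I would check the three requirements of Definition~\ref{v:sup}. The boundary inequality becomes $\delta\,a(-\partial\varphi/\partial\nu)\ge\delta^{l}\int_\Omega k(x,y,t)(a\varphi(y)+1)^{-l}\,dy$ on $\partial\Omega$, which, because $l-1>0$, holds for every $t\ge 0$ once $\delta^{l-1}$ is small, using the bound (\ref{K}) for $k$ and $\min_{\partial\Omega}(-\partial\varphi/\partial\nu)>0$. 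The initial inequality $\overline u\ge u_0$ holds whenever $\|u_0\|_{C(\overline\Omega)}\le\delta/(1+a)$. For the parabolic inequality, an estimate of the type (\ref{E1}) gives $-\Delta\overline u\ge-\Lambda\delta\,(a\varphi+1)^{-1}$ with $\Lambda=a\lambda_1+2a^2\sup_\Omega|\nabla\varphi|^2$, while $\overline u^{\,p}=\delta^{p}(a\varphi+1)^{-p}\ge\delta^{p}(a\varphi+1)^{-1}$ since $a\varphi+1\ge 1$ and $p<1$; hence $-\Delta\overline u+c(x,t)\overline u^{\,p}\ge\delta^{p}(a\varphi+1)^{-1}\bigl(c(x,t)-\Lambda\delta^{\,1-p}\bigr)$, which is nonnegative as soon as $c(x,t)$ stays above $\Lambda\delta^{\,1-p}$. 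Here $p<1$ is essential, since $\Lambda\delta^{\,1-p}\to 0$ as $\delta\to 0$.

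The crux is therefore to secure the lower bound $c(x,t)\ge\Lambda\delta^{\,1-p}$ where it is needed. Hypothesis (\ref{5.0}) with the continuity of $c$ only yields $c(x,t)\ge c_0>0$ on a slab $\overline\Omega\times[0,\tau]$, so the single stationary supersolution above finishes the proof at once if $c$ happens to be bounded away from $0$ on all of $\overline\Omega\times[0,+\infty)$, but in general one must work harder. I would then argue in two stages: on $[0,\tau]$ the supersolution above forces $u(\cdot,\tau)\le\delta(a\varphi+1)^{-1}$, which can be made as small as desired by taking $\|u_0\|_{C(\overline\Omega)}$ small (Theorem~\ref{p:theorem:comp-prins} on $Q_\tau$); then for $t\ge\tau$ one has to re-close the bound while the absorption is no longer controlled uniformly from below, exploiting that $u$ now lies below the level at which the superlinear ($l>1$) nonlocal flux could reignite growth. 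Making this continuation rigorous — essentially, verifying that the interior differential inequality can be maintained for all $t\ge 0$, which is possible precisely because $p<1$ makes the absorption dominant for small $u$ — is the step I expect to be the main obstacle.
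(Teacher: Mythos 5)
Your stage-one argument (a small stationary barrier $\delta(a\varphi+1)^{-1}$ on the slab $\overline\Omega\times[0,\tau]$ where continuity and (\ref{5.0}) give $c\ge c_0>0$) is sound, and you have correctly located the difficulty in the continuation to $t>\tau$. But the continuation strategy you sketch --- ``$u$ now lies below the level at which the superlinear nonlocal flux could reignite growth'' --- cannot work, because no such threshold exists. If $c(x,t)\equiv 0$ for $t\ge\tau$ (which (\ref{5.0}) permits) and $k\ge k_0>0$, then setting $V(t)=\int_\Omega u\,dx$ and integrating the equation gives
\begin{equation*}
V'(t)=\int_{\partial\Omega}\int_\Omega k(x,y,t)u^l(y,t)\,dy\,dS_x\ \ge\ k_0\,|\partial\Omega|\,|\Omega|^{1-l}\,V^l(t),\qquad t\ge\tau,
\end{equation*}
by Jensen's inequality, so \emph{every} solution with $u(\cdot,\tau)\not\equiv0$ blows up in finite time, no matter how small $u(\cdot,\tau)$ is. Consequently no strictly positive global supersolution can exist in this situation, and your barrier, which stays positive for all time, cannot be ``re-closed.''

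The missing idea is finite-time \emph{extinction}, which is exactly what $p<1$ buys and what the paper exploits. The paper takes the separable barrier $v(x,t)=\psi(x)f(t)$ with $\psi$ solving the auxiliary Neumann problem (\ref{E:5.0}) and $f$ solving the ODE $f'=f/b-c_0b^{p-1}f^p$ with $b$ as in (\ref{E:5.01}); because $p<1$, for $f(0)$ small enough the explicit solution satisfies $f(t)\equiv0$ for $t\ge\tau$. Thus the supersolution vanishes identically before the lower bound $c\ge c_0$ can fail, the hypothesis on $c$ is only ever used on $[0,\tau]$, and by Theorem~\ref{p:theorem:comp-prins} the solution itself is forced to zero by time $\tau$ (so in particular it never meets the regime where the boundary flux could cause blow-up). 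To repair your proof you would have to replace your time-independent barrier by one whose amplitude is driven to zero in finite time by the absorption; the linear-in-$\delta$ comparison $c\ge\Lambda\delta^{1-p}$ you derive is the symptom that a stationary amplitude cannot do this.
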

\begin{proof}
Thanks to the assumptions of the theorem we have $c(x,t)\geq c_0$
and $k(x,y,t) \leq K$ in $Q_{\tau}$ and $\partial\Omega \times
Q_{\tau},$ respectively, where $c_0,\,$ $K$ and $\tau$ are some
positive constants.

Let $\psi(x)$ be a positive solution of the following problem
    \begin{equation}\label{E:5.0}
\Delta\psi=1,\;x\in\Omega; \;\frac{\partial\psi(x)}{\partial \nu}=
\frac{|\Omega|}{|\partial\Omega|}, \;x\in\partial\Omega.
    \end{equation}
We put
\begin{equation}\label{E:5.01}
b =\inf_\Omega \psi(x)
\end{equation}
and suppose that $f(t)$ is a solution of the following equation
\begin{equation*}
f'(t) = \frac {f(t)}{b} - c_0 b^{p-1} f^p(t).
\end{equation*}
Then $f(t)$ can be written in an explicit form
\begin{equation*}
f(t)= \exp(t/b)\left\{ f^{1-p} (0) -  c_0 b^p \left( 1-
\exp[(p-1)t/b ]\right) \right\}^{1/(1-p)}_+.
\end{equation*}
We assume that
$$
0<f(0) < \left\{ c_0 b^p \left( 1-\exp[(p-1)\tau/b] \right)
\right\}^{1/(1-p)}.
$$
Then $f(t) \equiv 0$ for $t \geq \tau.$

To prove the theorem we construct a supersolution
of~(\ref{v:u})--(\ref{v:n}) in such a form that $v(x,t)=\psi(x)
f(t).$ It is not difficult to check that
\begin{equation}\label{E:5.1a}
v_t - \Delta v + c(x,t)v^p \geq 0
\end{equation}
for $x \in \Omega, \; t>0.$
Now we show that
\begin{equation}\label{E:5.1b}
\frac{\partial v}{\partial\nu} (x,t) \geq \int_{\Omega} k(x,y,t)
v^l(y,t) \, dy \quad x \in \partial\Omega, \; t>0
\end{equation}
for a suitable choice of $f(0).$ Indeed,
\begin{equation*}
\frac{\partial v}{\partial\nu} (x,t) = \frac{|\Omega| }{|\partial\Omega| }
f(t) \geq  \int_{\Omega} k(x,y,t) \psi^l(y) f^l(t) \, dy =
\int_{\Omega} k(x,y,t) v^l(y,t) \, dy, \, x \in \partial\Omega, \; t>0
\end{equation*}
for small values  of $f(0).$ By  (\ref{E:5.1a}), (\ref{E:5.1b}) we
conclude that $v(x,t)$ is  a supersolution
of~(\ref{v:u})--(\ref{v:n})  in $Q_T$ for any $T>0$ if
$$
u_0(x) \leq \psi (x) f (0), \quad x \in \Omega.
$$
Now Theorem~\ref{p:theorem:comp-prins} guarantees the existence of
global solutions of (\ref{v:u})--(\ref{v:n}) for small initial
data.
\end{proof}

The following two statements deal with the case $p=1, \,$ $l>1.$
Let us introduce the notations
\begin{equation*}\label{E:5.1d}
\underline c (t) = \inf\limits_{\Omega} c(x,t), \quad
 \overline k_c (t) = \sup\limits_{\partial\Omega\times\Omega}k(x,y,t) \exp \left\{ -(l-1) \int_0^t  \underline c (\tau) \, d\tau
 \right\},
\end{equation*}
\begin{equation*}\label{E:5.1e}
\underline k_c (x,t) = \inf_{\Omega} k (x,y,t) \exp \left\{ -(l-1)
\int_0^t \overline c (\tau) \, d\tau \right\},
\end{equation*}
where $\overline c (t)$ was defined in (\ref{E:5.1c}).

We prove that any nontrivial solution of
(\ref{v:u})--(\ref{v:n}) blows up in finite time if
\begin{equation}\label{E:5.4}
\int_0^\infty \int_{\partial\Omega} \underline k_c (x,t) \, dS_x
dt = \infty.
\end{equation}
Conversely, problem (\ref{v:u})--(\ref{v:n}) has bounded global
solutions with small initial data, provided that
\begin{equation}\label{E:5.2}
\int_0^\infty \overline k_c (t) \, dt < \infty,
\end{equation}
and there exist positive constants $\alpha,\;t_0$ and $K$ such
that $\alpha>t_0$ and
\begin{equation}\label{E:5.3}
\int\limits_{t-t_0}^t\frac{\overline k_c (\tau)}
{\sqrt{t-\tau}}\,d\tau\leq K\textrm{ for any }t\geq\alpha.
\end{equation}

\begin{theorem}\label{Th11} Let  $p=1, \,$ $l>1$ and
(\ref{E:5.4}) hold. Then any nontrivial solution of
(\ref{v:u})--(\ref{v:n}) blows up at time $t^\star \leq T,$ where
$T$ satisfies the equality
\begin{equation*}
\int_0^T \int_{\partial\Omega} \underline k_c (x,t) \, dS_x dt =
\frac{1}{(l-1)} \left\{ |\Omega| \int_\Omega u_0 (y) \, dy
\right\}^{-(l-1)}.
\end{equation*}
\end{theorem}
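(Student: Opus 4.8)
The plan is to derive a differential inequality for the average of $u$ over $\Omega$ and show it forces blow-up. First I would set $p=1$ and absorb the reaction term by the substitution $u(x,t) = z(x,t)\exp\!\big(-\int_0^t \overline c(\tau)\,d\tau\big)$. Since $c(x,t) \le \overline c(t)$, the function $z$ satisfies $z_t \ge \Delta z$ in $Q_T$ (the reaction term only helps), and on the boundary
$$
\frac{\partial z}{\partial\nu}(x,t) = \int_\Omega k(x,y,t) z^l(y,t)\,dy \cdot \exp\!\Big\{-(l-1)\int_0^t \overline c(\tau)\,d\tau\Big\} \cdot \exp\!\Big\{(l-2)\int_0^t(\overline c - \overline c)\Big\},
$$
so that, after collecting exponential factors correctly, $\partial z/\partial\nu \ge \int_\Omega \underline k_c(x,t) z^l(y,t)\,dy$. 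The point is that $\underline k_c$ is exactly the combination of $k$ and the exponential that makes this work; this is why that quantity was defined as it was. By Theorem~\ref{p:theorem:positive}, $u$ — and hence $z$ — is positive in $\overline\Omega$ for $t>0$, so all the nonlinear terms are well-defined and positive.

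Next I would introduce $m(t) = \int_\Omega z(x,t)\,dx$ and compute $m'(t) \ge \int_\Omega \Delta z\,dx = \int_{\partial\Omega} \frac{\partial z}{\partial\nu}\,dS_x \ge \int_{\partial\Omega}\Big(\int_\Omega \underline k_c(x,t) z^l(y,t)\,dy\Big)dS_x$. Pulling the $x$-integral inside, this is $\int_{\partial\Omega}\underline k_c(x,t)\,dS_x \cdot \int_\Omega z^l(y,t)\,dy$ — wait, $\underline k_c$ depends on both $x$ and $y$ only through the infimum over $y$, so it is a function of $x$ and $t$ alone, and the factorization is clean. Then by Jensen's (or H\"older's) inequality, $\int_\Omega z^l(y,t)\,dy \ge |\Omega|^{1-l}\big(\int_\Omega z(y,t)\,dy\big)^l = |\Omega|^{1-l} m(t)^l$. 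Hence
$$
m'(t) \ge |\Omega|^{1-l}\, m(t)^l \int_{\partial\Omega}\underline k_c(x,t)\,dS_x.
$$
Since $l>1$, separating variables gives $\frac{d}{dt}\big(m(t)^{-(l-1)}\big) \le -(l-1)|\Omega|^{1-l}\int_{\partial\Omega}\underline k_c(x,t)\,dS_x$, and integrating from $0$ to $t$ yields
$$
m(t)^{-(l-1)} \le m(0)^{-(l-1)} - (l-1)|\Omega|^{1-l}\int_0^t\!\!\int_{\partial\Omega}\underline k_c(x,\tau)\,dS_x\,d\tau.
$$
Because $m(0) = \int_\Omega u_0(y)\,dy$ and the right-hand side hits zero at the value $T$ defined in the statement (using $|\Omega|^{1-l} = (|\Omega|\int_\Omega u_0)^{-(l-1)} \cdot (\int_\Omega u_0)^{l-1}/|\Omega|^{l-1}$ — I would check the constant bookkeeping carefully here), $m(t)$ must become infinite no later than $t = T$, and therefore $u$ blows up at some $t^\star \le T$. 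Condition~(\ref{E:5.4}) guarantees the integral is unbounded, so this argument applies for \emph{every} nontrivial $u_0$ (the finiteness of $T$ for a given $u_0$ follows once the integral exceeds the prescribed value).

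The main obstacle I anticipate is justifying the differentiation under the integral sign and the boundary-flux identity $\int_\Omega \Delta z\,dx = \int_{\partial\Omega}\partial z/\partial\nu\,dS_x$ at the level of regularity available — $z$ is only $C^{2,1}(Q_T)\cap C^{1,0}(Q_T\cup\Gamma_T)$, so one should integrate on $\Omega$ and pass to the limit, or equivalently note that the solution is classical in the interior and use the divergence theorem on slightly shrunk domains, then let them exhaust $\Omega$ using the $C^{1,0}$ regularity up to the boundary. A secondary technical point is ensuring $m(t)$ is finite and positive before blow-up and that the substitution preserves the comparison framework; both follow from Theorems~\ref{p:theorem:positive} and~\ref{p:theorem:comp-prins} together with the local existence theory cited from~\cite{Gladkov}. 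The exponential bookkeeping in matching $\underline k_c$ and getting the constant in $T$ exactly right is routine but must be done with care.
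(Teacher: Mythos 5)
Your proof is correct and follows essentially the same route as the paper: an exponential substitution absorbing $\overline c(t)$, Green's identity, Jensen's inequality, and integration of the resulting Bernoulli-type inequality for $\int_\Omega z\,dx$; the paper merely packages the substitution as a comparison with an auxiliary problem $v_t=\Delta v$, $\partial v/\partial\nu=\underline k_c(x,t)\int_\Omega v^l\,dy$, $v(x,0)=u_0$, whereas you transform $u$ directly, which is equivalent. Your bookkeeping worry is well founded: the computation yields the threshold $\frac{1}{l-1}\bigl\{|\Omega|^{-1}\int_\Omega u_0(y)\,dy\bigr\}^{-(l-1)}$ (the mean of $u_0$), so the factor $|\Omega|$ inside the braces in the stated formula for $T$ appears to be a misprint rather than a defect of your argument.
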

\begin{proof}
Let  $v(x,t)$ be a solution of the following problem
  \begin{equation}\label{E:5.7}
v_{t}=\Delta v   \,\,\, \textrm{for} \,\,\,
x \in \Omega, \, t>0,
\end{equation}
  \begin{equation}\label{E:5.8}
\frac{\partial v(x,t)}{\partial \nu} = \underline k_c (x,t)
\int_{\Omega}
v^l (y,t) \, dy \,\,\, \textrm{for}\,\,\, x \in \partial\Omega, \, t>0, \\
\end{equation}
  \begin{equation}\label{E:5.9}
v(x,0)= u_0(x) \,\,\, \textrm{for}\,\,\, x \in
\Omega,
\end{equation}

By a direct computation we can check that
$$
\underline u (x,t) = \exp \left( - \int_0^t \overline c (\tau) \, d\tau \right) v(x,t)
$$
is a subsolution of (\ref{v:u})--(\ref{v:n}) in $Q_T$ for any
$T>0.$ Then  by Theorem~\ref{p:theorem:comp-prins} we have
$$
\underline u (x,t) \leq u(x,t), \, (x,t) \in Q_T
$$
for any $T>0.$ To prove the theorem we show that  any nontrivial
solution of (\ref{E:5.7})--(\ref{E:5.9}) blows up in finite time.
We set
\begin{equation*}
V(t)= \int_\Omega v(x,t) \, dx.
\end{equation*}
Integrating (\ref{E:5.7}) over $\Omega$ and using Green's identity
and Jensen's inequality, we have
  \begin{eqnarray*}
V'(t) &=& \int_\Omega \Delta v(x,t) \,dx  =
 \int_{\partial\Omega} \frac{\partial v(x,t)}{\partial \nu}
\,dS_x = \int_{\partial\Omega} \underline k_c (x,t) dS_x
\int_{\Omega}
v^l(y,t)\,dy \\
&\geq& |\Omega|^{1-l} \int_{\partial\Omega} \underline k_c
(x,t)dS_x V^l(t).
    \end{eqnarray*}
Integrating last inequality, we obtain the desired result due to
(\ref{E:5.4}).
\end{proof}

\begin{theorem}\label{Th12} Let  $p=1,\,$ $l>1$ and
(\ref{E:5.2}), (\ref{E:5.3}) hold. Then problem
(\ref{v:u})--(\ref{v:n}) has bounded global solutions for small
initial data.
\end{theorem}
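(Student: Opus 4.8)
The plan is to produce, for initial data of small supremum norm, a bounded supersolution of (\ref{v:u})--(\ref{v:n}) on $\overline\Omega\times[0,\infty)$ and then conclude via Theorem~\ref{p:theorem:comp-prins} together with the local existence and continuation results of~\cite{Gladkov}. The first step is a reduction that removes both the reaction term (harmless since $p=1$ and $c\ge0$) and the exponent $l$ from the construction. I would look for a supersolution of the form $\overline u(x,t)=\exp\!\left(-\int_0^t\underline c(\tau)\,d\tau\right)z(x,t)$ with $z\ge0$. Since $c(x,t)\ge\underline c(t)$, a direct computation gives $\overline u_t-\Delta\overline u+c(x,t)\overline u=\exp(-\int_0^t\underline c)\big[(z_t-\Delta z)+(c(x,t)-\underline c(t))z\big]\ge0$ as soon as $z_t\ge\Delta z$; and since, by the very definition of $\overline k_c$, one has $k(x,y,t)\exp\{-(l-1)\int_0^t\underline c(\tau)\,d\tau\}\le\overline k_c(t)$, the boundary condition (\ref{v:sup^g}) for $\overline u$ follows once $\partial z/\partial\nu\ge\overline k_c(t)\int_\Omega z^l(y,t)\,dy$ on $\partial\Omega$. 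Thus it suffices to exhibit a bounded nonnegative $z$ with $z_t\ge\Delta z$ in $\Omega\times(0,\infty)$, $\partial z/\partial\nu\ge\overline k_c(t)\int_\Omega z^l(y,t)\,dy$ on $\partial\Omega\times(0,\infty)$, and $z(x,0)\ge u_0(x)$.

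To construct $z$, let $v$ solve the linear problem $v_t=\Delta v$ in $\Omega\times(0,\infty)$, $\partial v/\partial\nu=\overline k_c(t)$ on $\partial\Omega\times(0,\infty)$, with the fixed smooth positive initial datum $\Phi(x)=\frac{|\Omega|}{|\partial\Omega|}\overline k_c(0)\,\psi(x)$, where $\psi$ is the function from (\ref{E:5.0}); this choice makes $\partial\Phi/\partial\nu=\overline k_c(0)$, so by the classical theory of linear parabolic equations $v$ is nonnegative and possesses the regularity required in Definition~\ref{v:sup}. Now put $z(x,t)=\varepsilon(1+v(x,t))$ for a small $\varepsilon>0$. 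Then $z_t-\Delta z=0$, $z(x,0)=\varepsilon(1+\Phi(x))\ge\varepsilon$, and $\partial z/\partial\nu=\varepsilon\overline k_c(t)$, so the boundary requirement reads $\varepsilon\overline k_c(t)\ge\varepsilon^l\overline k_c(t)\int_\Omega(1+v(y,t))^l\,dy$, which holds as soon as $\varepsilon^{l-1}(1+\|v\|_{L^\infty(\Omega\times(0,\infty))})^l|\Omega|\le1$ — possible precisely because $l>1$. Fixing such an $\varepsilon$ and assuming $\sup_\Omega u_0\le\varepsilon$, the function $\overline u$ above is a supersolution of (\ref{v:u})--(\ref{v:n}) in $Q_T$ for every $T>0$; since $l>1$, Theorem~\ref{p:theorem:comp-prins} applies without any positivity hypothesis and yields, for every solution $u$, the bound $u(x,t)\le\overline u(x,t)\le\varepsilon\big(1+\|v\|_{L^\infty(\Omega\times(0,\infty))}\big)$. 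This uniform bound, together with the continuation property from~\cite{Gladkov}, rules out finite-time blow-up, so $u$ is a bounded global solution.

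The heart of the matter, and the step I expect to be the main obstacle, is the uniform-in-time estimate $\|v\|_{L^\infty(\Omega\times(0,\infty))}<\infty$. Write $v$ through the Neumann heat kernel $G_N$ of $\Omega$:
\[v(x,t)=\int_\Omega G_N(x,y,t)\Phi(y)\,dy+\int_0^t\!\!\int_{\partial\Omega}G_N(x,\xi,t-\tau)\,\overline k_c(\tau)\,dS_\xi\,d\tau.\]
Since $\int_\Omega G_N(x,y,t)\,dy=1$ and $\int_{\partial\Omega}G_N(x,\xi,s)\,dS_\xi\le C(1+s^{-1/2})$ (a classical estimate), this gives $0\le v(x,t)\le\|\Phi\|_\infty+C\int_0^t\overline k_c(\tau)\,d\tau+C\int_0^t\overline k_c(\tau)(t-\tau)^{-1/2}\,d\tau$. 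The first integral is controlled uniformly in $t$ by (\ref{E:5.2}). For the second I would split the interval at $t-t_0$: for $t\ge\alpha$ the part over $(t-t_0,t)$ is at most $CK$ by (\ref{E:5.3}), while over $(0,t-t_0)$ one has $(t-\tau)^{-1/2}\le t_0^{-1/2}$ and hence a bound $Ct_0^{-1/2}\int_0^\infty\overline k_c(\tau)\,d\tau$, again finite by (\ref{E:5.2}); for $t\le\alpha$ one simply estimates $\overline k_c$ by its maximum on the compact interval $[0,\alpha]$ and integrates $(t-\tau)^{-1/2}$. This produces a bound on $v$ independent of $t$, which is exactly what the choice of $\varepsilon$ in the previous paragraph requires, completing the argument.
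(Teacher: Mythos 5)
Your proposal is correct and follows essentially the same route as the paper: both arguments peel off the absorption term with the factor $\exp\bigl(-\int_0^t\underline c(\tau)\,d\tau\bigr)$, reduce to the linear auxiliary Neumann problem $h_t=\Delta h$, $\partial h/\partial\nu=\overline k_c(t)$, and then scale its bounded solution by a small constant, using $l>1$ to satisfy the nonlocal boundary inequality. The only difference is that you prove the uniform boundedness of that linear problem directly from the Neumann heat-kernel estimate and the splitting of $\int_0^t\overline k_c(\tau)(t-\tau)^{-1/2}\,d\tau$ at $t-t_0$ via (\ref{E:5.2})--(\ref{E:5.3}), whereas the paper cites this fact from~\cite{Gladkov_Kavitova2}.
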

\begin{proof}
 Let  $w(x,t)$ be a solution of the following problem
\begin{eqnarray}\label{E:5.25}
\left\{
\begin{array}{ll}
w_{t}=\Delta w   \,\,\, \,\,\,& \textrm{for} \,\,\,
x \in \Omega, \, t>0,  \\
\frac{\partial w(x,t)}{\partial \nu} =  \overline k_c (t) \int_{\Omega}
w^l (y,t) \, dy \,\,\,\,\,\,\,\,& \textrm{for}\,\,\, x \in \partial\Omega, \, t>0,  \\
w(x,0)= u_0(x) \,\,\,\,\,\,& \textrm{for}\,\,\, x \in \Omega.
\end{array} \right.
\end{eqnarray}
By a direct computation we can check that
$$
\overline u (x,t) = \exp \left( - \int_0^t  \underline c (\tau) \, d\tau \right) w(x,t)
$$
is a supersolution of (\ref{v:u})--(\ref{v:n}) in $Q_T$ for any
$T>0.$ To prove the theorem we show the existence of global
bounded solutions of (\ref{E:5.25}).  Let us consider the
following auxiliary linear problem
\begin{equation}\label{E:5.26}
\left\{
  \begin{array}{ll}
h_t=\Delta h, \; x\in\Omega, \; t>0 \\
\frac{\partial h(x,t)}{\partial \nu}= \overline
k_c(t),\;x\in\partial\Omega,\;t>0,
    \\
    h(x,0)=h_0(x),\; x\in\Omega.
  \end{array}
\right.
\end{equation}
As it was proved in  \cite{Gladkov_Kavitova2}  any solution of
(\ref{E:5.26}) is a bounded function. Now we construct a
supersolution of (\ref{E:5.25}) in the following form $g(x,t) = a
h(x,t) ,$ where $a$ is some positive constant. It is obvious,
$$
g_t = \Delta g,\;x\in\Omega,\;t>0.
$$
Moreover,
$$
\frac{\partial g(x,t)}{\partial \nu} = a   \overline k_c (t) \geq
a^l   \overline k_c (t) \int_\Omega h^l (y,t) \, dy = \overline
k_c (t) \int_\Omega g^l (y,t) \, dy, \;x\in \partial \Omega,\;t>0,
$$
for small values of $a.$ Then by a comparison principle for
(\ref{E:5.25})
$$
w(x,t) \leq g(x,t), \, (x,t) \in Q_T
$$
for any $T>0$ if  $u_0 (x) \leq a h_0 (x) , \, x \in \Omega.$
\end{proof}
\begin{remark}
By Theorem~\ref{Th11} and Theorem~\ref{Th12} the
condition~(\ref{E:5.2}) is optimal for global existence of
solutions of~(\ref{v:u})--(\ref{v:n}) with $c(x,t) = c(t)$ and
$k(x,y,t) = k(t).$ Arguing in the same way as in the proof of
Lemma~3.3 of \cite{Gladkov_Kavitova2}  it is easy to show
that~(\ref{E:5.3}) is optimal for the existence of nontrivial
bounded global solutions of~(\ref{v:u})--(\ref{v:n}) with $c(x,t)
= c(t)$ and $k(x,y,t)=k(t)$ under the condition
$$
\int_0^\infty  c(t) \, dt < \infty.
$$
 \end{remark}


Now we prove finite time blow-up of all nontrivial solutions of
(\ref{v:u})--(\ref{v:n})  for $l>p>1.$ Let $m_0 = \inf \{
\sup_\Omega \psi (x)\},$ where $\psi (x)$ was defined in
(\ref{E:5.0}). To formulate blow-up result we put
\begin{equation*}
\underline k (t) =
\inf\limits_{\partial\Omega\times\Omega}k(x,y,t)
\end{equation*}
and suppose that
\begin{equation}\label{E:5.27}
c(x,t) \leq c_1 (t),\,\,  c_1 (t) \in C^1([t_0,\infty)), \,\, c_1
(t) > 0 \,\, \textrm{for} \,\, t \geq t_0,
\end{equation}
where $t_0$ is some positive constant,
\begin{equation}\label{E:5.210}
\liminf_{t \to \infty} \frac{c'_1(t)}{c_1(t)} > - \frac {p-1}{m_0}
\end{equation}
and
\begin{equation}\label{E:5.28}
\lim_{t \to \infty} \underline k (t) [c_1(t)]^{(1-l)/(p-1)} =
\infty.
\end{equation}
\begin{theorem}\label{Th13} Let  $l>p>1$ and (\ref{E:5.27}) --
(\ref{E:5.210}) hold. Then any nontrivial solution of
(\ref{v:u})--(\ref{v:n}) blows up in finite time.
\end{theorem}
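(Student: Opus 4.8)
The plan is to mimic the structure of the proof of Theorem~\ref{Th10} (the case $l>p>1$): first obtain a lower bound on any nontrivial solution via a spatially constant subsolution built from the ODE comparison, then localize near the boundary and construct a self-similar blow-up subsolution of the localized problem~(\ref{E:4.13}), using the comparison principle of Theorem~\ref{p:theorem:comp-prins}. First I would use Theorem~\ref{p:theorem:positive} to guarantee that a nontrivial solution $u$ is strictly positive in $\overline\Omega$ for any $t>0$, so that after a time shift we may assume $u(x,t_0)\ge A>0$ on $\overline\Omega$ for some $A$ and $t_0$ as in~(\ref{E:5.27}). Next, using~(\ref{E:5.27}), the function $\psi(x)f(t)$ — with $\psi$ the solution of~(\ref{E:5.0}) and $f$ solving an appropriate Bernoulli ODE driven by $c_1(t)$ — should furnish, after comparison, a lower bound of the form $u(x,t)\ge \kappa\,[c_1(t)]^{-1/(p-1)}$ for large $t$; here~(\ref{E:5.210}) is exactly the condition ensuring that the relevant ODE subsolution does not decay too fast relative to $c_1$, i.e. that $\psi(x)f(t)$ really is a subsolution for all large $t$ (this is the analogue of~(\ref{E:4.12}), but now with a time-dependent, possibly vanishing, lower bound).

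Having the lower bound $u(x,t)\ge \kappa\,[c_1(t)]^{-1/(p-1)}$ in hand, I would fix a large time $T_0$ and, following Theorem~\ref{global} and Theorem~\ref{Th10}, pass to the boundary coordinates $(\overline x,s)$ near $\partial\Omega$ and consider the localized problem~(\ref{E:4.13}) posed on $\Omega_\gamma\times(T_0,T_2)$, with $v(x,T_0)=u(x,T_0)$ and $v=u$ on the inner boundary. As before, $u$ restricted there is a supersolution of~(\ref{E:4.13}). The candidate subsolution is a self-similar profile of the type $\psi(s,t)=(T_2+s-t)^{-\sigma}$ with $\sigma>2/(l-1)$, or more likely a version with a curvature/coefficient-adjusted width, $\psi(s,t)=(T_2+\omega s-t)^{-\sigma}$. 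The parabolic inequality $-\psi_t+\Delta\psi-c(x,t)\psi^p\ge 0$ is checked exactly as in Theorem~\ref{Th10} using~(\ref{E:1.102}) and~(\ref{E:4.14a}); the lower-order term is dominated because $\sigma+2-\sigma p>0$ can be arranged and $c_1(t)$ grows at most polynomially compared to the blow-up rate — here condition~(\ref{E:5.27}) gives the needed bound on $c(x,t)$. The boundary inequality $\partial\psi/\partial\nu\le \int_{\Omega_\gamma}k\,\psi^l\,dy$ is where~(\ref{E:5.28}) enters: one bounds $\int_{\Omega_\gamma}k\,\psi^l$ from below by $\underline k(t)\,\underline J\int_0^\gamma(T_2+s-t)^{-\sigma l}\,ds$ and compares with $\sigma(T_2-t)^{-\sigma-1}$; the ratio of the two sides is controlled by a power of $(T_2-t)$ times $\underline k(t)$, and matching this against the lower bound on $u$ (which scales like $[c_1(t)]^{-1/(p-1)}$, hence $\psi\le u$ requires $T_2^{-\sigma}\lesssim [c_1(T_0)]^{-1/(p-1)}$) forces exactly the combination $\underline k(t)[c_1(t)]^{(1-l)/(p-1)}\to\infty$. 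So~(\ref{E:5.28}) is precisely the algebraic condition that makes the boundary flux produced by $\psi$ small enough, relative to the guaranteed size of $u$, for the comparison to close.

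Finally, choosing $T_0$ large, then $\gamma$ and $T_2-T_0$ small in the right order, one verifies the initial and lateral-boundary comparisons $\psi(s,T_0)\le u$ on $\Omega_\gamma$ and $\psi(s,t)\le u$ on $\partial\Omega_\gamma\setminus\partial\Omega$ for $t\in[T_0,T_2]$ — these follow from the lower bound $u(x,t)\ge\kappa[c_1(t)]^{-1/(p-1)}$ together with the constraints on $\sigma$, $\gamma$, $T_2$. The comparison principle for~(\ref{E:4.13}) (proved as in Theorem~\ref{p:theorem:comp-prins}) then yields $u\ge\psi$ on $Q(\gamma,T_0,T_2)$, and since $\psi(0,t)\to\infty$ as $t\to T_2$, the solution $u$ blows up at or before $T_2$; as $u$ was an arbitrary nontrivial solution, every nontrivial solution blows up in finite time. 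The main obstacle, and the only genuinely new point compared with Theorem~\ref{Th10}, is bookkeeping the three interlocking smallness/largeness conditions on $T_0,\gamma,T_2-T_0$ so that simultaneously (i) the parabolic inequality holds with the time-dependent coefficient $c_1(t)$, (ii) the boundary inequality holds using~(\ref{E:5.28}), and (iii) the lower bound from~(\ref{E:5.210}) is strong enough to dominate $\psi$ on the parabolic boundary of $Q(\gamma,T_0,T_2)$; verifying that~(\ref{E:5.210}) and~(\ref{E:5.28}) are jointly sufficient for this is the crux.
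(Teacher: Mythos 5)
Your first step---positivity via Theorem~\ref{p:theorem:positive}, the Bernoulli ODE $f'=f/m-m^{p-1}c_1(t)f^p$, the bound $f(t)\ge d_1[c_1(t)]^{-1/(p-1)}$ extracted from (\ref{E:5.210}), and the subsolution $\psi(x)f(t)$ yielding $u(x,t)\ge d_2[c_1(t)]^{-1/(p-1)}$---is exactly the first half of the paper's proof. The second half of your argument, however, diverges from the paper and as written does not close. The paper never localizes near the boundary: it integrates the equation over $\Omega$, uses Green's identity and the lower bound on $u$ to absorb the absorption term into half of the boundary term (via $c_1u^p\le d_2^{p-l}[c_1]^{(l-1)/(p-1)}u^l$ combined with (\ref{E:5.28})), obtains $U'\ge\xi(t)U$ with $\xi\to\infty$ for $U(t)=\int_\Omega u\,dx$, and then, after substituting $w=e^{-t/m}u$, derives $W'\ge\sigma(t)W^l-W/m$ with $\sigma\to\infty$ and $W\to\infty$ by Jensen's inequality, forcing finite-time blow-up of the spatial average. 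This is a cumulative, Kaplan-type argument that exploits the merely asymptotic hypothesis (\ref{E:5.28}) over an unbounded time interval.

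Your localized self-similar subsolution instead requires the comparison to close on a short window $(T_0,T_2)$, and the three constraints you list are not jointly satisfiable under (\ref{E:5.27})--(\ref{E:5.28}) alone. Concretely: the interior inequality for $(T_2+s-t)^{-\sigma}$ forces $\sigma+2-\sigma p\ge 0$, i.e.\ $\sigma<2/(p-1)$; the initial comparison $\psi(\cdot,T_0)\le u(\cdot,T_0)$ forces $T_2-T_0\gtrsim[c_1(T_0)]^{1/(\sigma(p-1))}$, since the only available lower bound on $u$ is $d_2[c_1]^{-1/(p-1)}$; and the boundary inequality forces $(T_2-t)^{\sigma(l-1)-2}\lesssim\underline k(t)$. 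Combining the last two, you need $\underline k(t)\gtrsim[c_1(T_0)]^{(\sigma(l-1)-2)/(\sigma(p-1))}$, and because $\sigma<2/(p-1)$ this exponent is strictly smaller than $(l-1)/(p-1)$ (it cannot exceed $(l-p)/(p-1)=(l-1)/(p-1)-1$). When $c_1(t)\to0$---which (\ref{E:5.210}) permits---this is strictly stronger than what (\ref{E:5.28}) supplies, namely $\underline k(t)\ge\gamma(t)[c_1(t)]^{(l-1)/(p-1)}$ with $\gamma(t)\to\infty$: for instance $c_1(t)=e^{-\delta t}$ with $\delta$ small and $\underline k(t)=t\,e^{-\delta(l-1)t/(p-1)}$ satisfies all hypotheses but defeats your matching, since you would need $\gamma(t)\gtrsim[c_1(t)]^{-1}=e^{\delta t}$. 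When $c_1(t)\to\infty$ the window $T_2-T_0$ must be large and the interior inequality fails instead. So the step you yourself flag as ``the crux'' is precisely where the argument breaks; one needs the paper's global averaging mechanism rather than a short-time boundary-layer comparison.
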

\begin{proof}
Let $u(x,t)$ be a nontrivial global solution of (\ref{v:u})--(\ref{v:n}). Then by Theorem~\ref{p:theorem:positive}
\begin{equation}\label{E:5.11}
 u(x,t) > 0 \,\, \textrm{for} \,\, x \in \overline\Omega,  \,\, t > 0.
\end{equation}
At first we get an universal lower bound for $u(x,t).$ From
(\ref{E:5.210}) we see that there exists a constant $m$ satisfying
$m > m_0$ and
\begin{equation}\label{E:5.111}
\liminf_{t \to \infty} \frac{c'_1(t)}{c_1(t)} > - \frac {p-1}{m}.
\end{equation}
Let us define $f(t)$ as a solution of the following equation
\begin{equation}\label{E:5.11a}
f'(t) = \frac {f(t)}{m} - m^{p-1} c_1(t)  f^p(t),  \,\, t \geq t_1
\geq t_0,
\end{equation}
Then $f(t)$ can be written in an explicit form
\begin{equation}\label{E:5.12}
f(t)= \exp(t/m)\left\{ [f (t_1)\exp(-t_1/m) ]^{1-p} + (p-1)
m^{p-1} \int_{t_1}^{t} \exp[(p-1)\tau/m ] c_1(\tau) \, d\tau
\right\}^{-\frac {1}{p-1}}.
\end{equation}
We rewrite  (\ref{E:5.12}) as following
\begin{equation}\label{E:5.13}
\left\{ \frac {f(t)}{[c_1(t)]^{-1/(p-1)} }\right\}^{p-1}
\hskip-5pt = \frac{\exp[(p-1)t/m ] c_1(t)}{[f (t_1)\exp(-t_1/m)
]^{1-p} + (p-1) m^{p-1} \int_{t_1}^{t} \exp[(p-1)\tau/m ]
c_1(\tau) \, d\tau}.
\end{equation}
We prove that right hand side $I$ of (\ref{E:5.13}) is bounded
below by some positive constant. The numerator and the denominator
of $I$ tend to infinity as $t \to \infty$ by virtue of
(\ref{E:5.210}). Using (\ref{E:5.111}) we can obtain that
\begin{equation}\label{E:5.15}
\liminf_{t \to \infty} I \geq \liminf_{t \to \infty}
\frac{\exp[(p-1)t/m] \left\{ (p-1)c_1(t)/m + c_1'(t)\right\}}
{(p-1) m^{p-1}  c_1(t) \exp[(p-1)t/m ]} > 0.
\end{equation}
By (\ref{E:5.12}) -- (\ref{E:5.15}) we conclude that
\begin{equation}\label{E:5.16}
f(t) \geq  d_1 [ c_1(t) ]^{-\frac{1}{p-1}}, \,\, t \geq t_1,
\end{equation}
where $d_1 >0.$

Let $\psi (x)$ satisfy (\ref{E:5.0}) and
\begin{equation}\label{E:5.16a}
\sup_\Omega \psi (x) =  m.
\end{equation}
Now we define
\begin{equation}\label{E:5.17}
\underline u (x,t) = \psi (x) f (t)
\end{equation}
and show that $\underline u (x,t)$ is a subsolution of
(\ref{v:u})--(\ref{v:n}) in $\Omega \times (t_1, T)$ under
suitable choice of $t_1$ and $T > t_1.$ Due to (\ref{E:5.0}),
(\ref{E:5.11a}) we have
\begin{equation}\label{E:5.17a}
\underline u_t \leq \Delta \underline u - c(x,t) \underline u^p,
\,\,\, x \in \Omega, \, t>t_1 .
\end{equation}
Using (\ref{E:5.0}), (\ref{E:5.28}), (\ref{E:5.16}), (\ref{E:5.17}) we find that
  \begin{eqnarray}\label{E:5.17b}
\frac{\partial \underline u}{\partial\nu} (x,t) &=& \frac{|\Omega|
}{|\partial\Omega| } f(t) \leq  d_1^{l-1}
[c_1(t)]^{-\frac{l-1}{p-1}} \underline k (t) f(t)
\int_{\Omega} \psi^l(y) \, dy \nonumber \\
&\leq&  \int_{\Omega} k(x,y,t) \underline u^l(y,t) \, dy, \, x \in \partial\Omega, \; t > t_1
 \end{eqnarray}
for large values of $t_1.$ By  (\ref{E:5.11}), (\ref{E:5.16}) --
(\ref{E:5.17b}) and Theorem~\ref{p:theorem:comp-prins}
\begin{equation}\label{E:5.18}
u (x,t) \geq \underline u (x,t) \geq  d_2 [ c_1(t)
]^{-\frac{1}{p-1}}, \,\,\, (x,t) \in \Omega \times (t_1, T)
\end{equation}
for some $d_2 >0$ and any $T > t_1$ if
$$
f(t_1)  \leq \frac{\inf_\Omega u(x,t_1) }{m}.
$$
We set
\begin{equation}\label{E:5.180}
U(t)= \int_\Omega u(x,t) \, dx.
\end{equation}
Integrating (\ref{v:u}) over $\Omega$ and using (\ref{E:5.27}),
(\ref{E:5.28}), (\ref{E:5.18}), (\ref{E:5.180}) and Green's
identity, we have
     \begin{eqnarray}\label{E:5.181}
U'(t) &=& \int_\Omega \left( \Delta u (x,t) - c(x,t) u^p (x,t)
\right) \, dx \geq \int_{\Omega} \left( |\partial\Omega|
\underline k (t) u^{l} (x,t)
- c_1 (t) u^p (x,t) \right) \, dx  \nonumber \\
&\geq& \frac{1}{2} |\partial\Omega| \underline k (t) \int_{\Omega}
u^{l} (x,t) \, dx \geq \frac{1}{2} |\partial\Omega|  d_2^{l-1}
\underline k (t) [c_1 (t)]^{-\frac{l-1}{p-1}} \int_{\Omega} u
(x,t) \, dx \nonumber \\
&=& \xi (t) U (t),
    \end{eqnarray}
where $t \geq t_2, \,$  $t_2$ is large enough and $\lim_{t \to
\infty} \xi (t) \, dt = \infty.$  Integrating (\ref{E:5.181}) over
$(t_2,t)$ we find that
\begin{equation}\label{E:5.182}
U(t)\geq U(t_2) \exp \left( \int_{t_2}^t \xi (\tau) \, d\tau
\right).
\end{equation}

Now we deduce lower bound for $\underline k (t).$ From
(\ref{E:5.111}) we conclude that
\begin{equation}\label{E:5.182a}
c_1(t)\geq c_1(t_3) \exp \left( -\frac {(p-1)t}{m} \right), \,\, t
\geq t_3
\end{equation}
for some $t_3 \geq t_2.$ By (\ref{E:5.28}), (\ref{E:5.182a}) we
have
 \begin{equation}\label{E:5.182c}
\underline k (t) = \gamma_1 (t) [c_1(t)]^{(l-1)/(p-1)} \geq
\gamma_2 (t) \exp  \left( -\frac {(l-1)t}{m} \right) \,\,
\textrm{for} \,\, t \geq t_3,
    \end{equation}
where $\lim_{t \to \infty}  \gamma_i (t) = \infty, \,$ $i=1,2.$

Let us change unknown function
\begin{equation}\label{E:5.183}
w(x,t) =  \exp \left( -\frac{t}{m} \right) u(x,t).
\end{equation}
It is easy to check that $w(x,t)$ is a solution of the following
problem
\begin{eqnarray}
w_t &=& \Delta w- c(x,t)\exp \left( \frac{(p-1)t}{m} \right) w^p -
\frac{1}{m} w,\;x\in\Omega,\;t>0, \label{E:5.184}\\
\frac{\partial w(x,t)}{\partial\nu} &=& \exp \left(
\frac{(l-1)t}{m} \right)
\int_{\Omega}{k(x,y,t)w^l(y,t)}\,dy,\;x\in\partial\Omega,\; t \geq
0, \label{E:5.184a}\\
    u(x,0) &=& u_{0}(x),\; x\in\Omega. \nonumber
\end{eqnarray}

We put
\begin{equation}\label{E:5.185}
W(t)= \int_\Omega w(x,t) \, dx.
\end{equation}
From (\ref{E:5.180}), (\ref{E:5.182}),  (\ref{E:5.183}),
(\ref{E:5.185}) we conclude that
\begin{equation*}\label{E:5.186}
\lim_{t \to \infty} W(t) = \infty.
\end{equation*}

Integrating (\ref{E:5.184}) over $\Omega$ and using
(\ref{E:5.27}), (\ref{E:5.28}), (\ref{E:5.18}), (\ref{E:5.181}),
(\ref{E:5.182c}), (\ref{E:5.184a}), (\ref{E:5.185}), Green's
identity and Jensen's inequality, we have
 \begin{equation*}
W'(t) \geq \sigma (t) W^l (t) - \frac{1}{m} W(t) \,\, \textrm{for}
\,\, t \geq t_3,
    \end{equation*}
where $\lim_{t \to \infty} \sigma (t) = \infty.$ Hence $W(t)$
blows in finite time.
\end{proof}

To prove the optimality of (\ref{E:5.28}) for blow-up of any
nontrivial solution of (\ref{v:u})--(\ref{v:n}) we put
\begin{equation*}
\overline k (t) = \sup\limits_{\partial\Omega\times\Omega}k(x,y,t)
\end{equation*}
and assume that
\begin{equation}\label{E:5.19}
c(x,t) \geq c_2 (t)   \,\, \textrm{for} \,\, t \geq 0,\,\,  c_2
(t) \in C([0,\infty)) \cap C^1([\sigma,\infty)), \,\, c_2 (t) > 0
\,\, \textrm{for} \,\, t \geq \sigma,
\end{equation}
\begin{equation}\label{E:5.20}
\limsup_{t \to \infty} \frac{c'_2(t)}{c_2(t)} \leq 0,
\end{equation}
\begin{equation}\label{E:5.21}
\overline k (t) \leq K_c [c_2(t)]^{(l-1)/(p-1)},\,\, t \geq 0,
\end{equation}
where $\sigma$ and $K_c$ are some positive constants.
\begin{theorem}\label{Th14} Let  $l>p>1$  and (\ref{E:5.19}) --
(\ref{E:5.20}) hold. Then problem (\ref{v:u})--(\ref{v:n}) has
global solutions for small initial data.
\end{theorem}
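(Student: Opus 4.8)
The plan is to mirror the proof of Theorem~\ref{Th13} with all inequalities reversed, producing a globally defined \emph{supersolution} of~(\ref{v:u})--(\ref{v:n}) of the separated form $\overline u(x,t)=\psi(x)f(t)$. Here $\psi$ is a positive solution of~(\ref{E:5.0}); since that solution is unique up to an additive constant, by adding a large constant we may assume $b:=\inf_\Omega\psi(x)$ is as large as we please, with $\sup_\Omega\psi(x)=b+C_0$ for a fixed $C_0$, so that $\int_\Omega\psi^l(y)\,dy\le(2b)^l|\Omega|$ once $b$ is large. Set $\widehat c(t)=\max\{c_2(t),0\}$; then $\widehat c\in C([0,\infty))$, $\widehat c\ge0$, $\widehat c=c_2$ on $[\sigma,\infty)$, and by~(\ref{E:5.19}) together with $c\ge0$ we have $c(x,t)\ge\widehat c(t)$ for all $x\in\overline\Omega$, $t\ge0$.

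Next, let $f$ solve $f'(t)=f(t)/b-b^{p-1}\widehat c(t)f^p(t)$, $f(0)=f_0>0$, with $f_0$ small (chosen at the end). Since the right-hand side is continuous in $t$, locally Lipschitz in $f$, and vanishes at $f=0$, $f$ stays positive, and $f'\le f/b$ gives the crude bound $0<f(t)\le f_0e^{t/b}$, so $f$ is defined for all $t\ge0$. On $[\sigma,\infty)$, where $\widehat c=c_2>0$, the substitution $g=f^{1-p}$ gives
\begin{equation*}
f(t)=e^{t/b}\left\{e^{(p-1)\sigma/b}f(\sigma)^{1-p}+(p-1)b^{p-1}\int_\sigma^t e^{(p-1)\tau/b}c_2(\tau)\,d\tau\right\}^{-1/(p-1)},
\end{equation*}
the brace being positive and nondecreasing. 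The decisive estimate, which I expect to be the main obstacle, is to deduce from~(\ref{E:5.20}) the sharp asymptotic bound $\limsup_{t\to\infty}c_2(t)f^{p-1}(t)\le b^{-p}$, so that $f(t)\le 2^{1/(p-1)}b^{-p/(p-1)}[c_2(t)]^{-1/(p-1)}$ for $t\ge t_1$ and some $t_1\ge\sigma$. Since $c_2f^{p-1}$ is $e^{(p-1)t/b}c_2(t)$ divided by the brace above, and the brace tends to $\infty$ when $\int_\sigma^\infty e^{(p-1)\tau/b}c_2(\tau)\,d\tau=\infty$, this case follows from Stolz--Cesàro:
\begin{equation*}
\limsup_{t\to\infty}c_2(t)f^{p-1}(t)\le\limsup_{t\to\infty}\frac{(p-1)c_2(t)/b+c_2'(t)}{(p-1)b^{p-1}c_2(t)}=\frac{1}{(p-1)b^{p-1}}\Bigl(\frac{p-1}{b}+\limsup_{t\to\infty}\frac{c_2'(t)}{c_2(t)}\Bigr)\le\frac{1}{b^p}.
\end{equation*}
In the complementary case $\int_\sigma^\infty e^{(p-1)\tau/b}c_2(\tau)\,d\tau<\infty$, which by~(\ref{E:5.20}) forces $\sup_{t\ge\sigma}e^{(p-1)t/b}c_2(t)<\infty$, and the boundary estimate below follows there from the crude bound on $f$ alone.

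With $f$ fixed I would verify the three inequalities of Definition~\ref{v:sup} for $\overline u=\psi f$ on $Q_T$, $T>0$ arbitrary. Since $\Delta\psi=1$,
\begin{equation*}
\overline u_t-\Delta\overline u+c(x,t)\overline u^p=f(t)\Bigl(\frac{\psi(x)}{b}-1\Bigr)+f^p(t)\,\psi(x)\bigl(c(x,t)\psi^{p-1}(x)-b^{p-1}\widehat c(t)\bigr)\ge0,
\end{equation*}
because $\psi\ge b$ gives $\psi/b\ge1$, $\psi^{p-1}\ge b^{p-1}$, and $c\ge\widehat c\ge0$. On $\partial\Omega$ one has $\partial\overline u/\partial\nu=f(t)|\Omega|/|\partial\Omega|$ and $\int_\Omega k(x,y,t)\overline u^l(y,t)\,dy\le\overline k(t)f^l(t)\int_\Omega\psi^l(y)\,dy$, so the boundary inequality holds provided $\overline k(t)f^{l-1}(t)\int_\Omega\psi^l\le|\Omega|/|\partial\Omega|$ for all $t\ge0$. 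For $t\ge t_1$, writing $\overline k(t)f^{l-1}(t)=\bigl(\overline k(t)[c_2(t)]^{-(l-1)/(p-1)}\bigr)\bigl(c_2(t)f^{p-1}(t)\bigr)^{(l-1)/(p-1)}$ and invoking~(\ref{E:5.21}) and the asymptotic bound yields
\begin{equation*}
\overline k(t)f^{l-1}(t)\int_\Omega\psi^l\le K_c\bigl(c_2(t)f^{p-1}(t)\bigr)^{(l-1)/(p-1)}(2b)^l|\Omega|\le K_c\,2^{(l-1)/(p-1)+l}\,|\Omega|\,b^{\,l-p(l-1)/(p-1)},
\end{equation*}
and since $l>p$ the exponent $l-p(l-1)/(p-1)=-(l-p)/(p-1)$ is negative, so the right-hand side tends to $0$ as $b\to\infty$; this is the single place where $l>p$ is used. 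For $0\le t\le t_1$ (and, in the complementary case, for all $t\ge\sigma$ as well as $0\le t\le\sigma$), the boundedness of $\overline k$ on compacts together with $f(t)\le f_0e^{t/b}$ and $\sup_{t\ge\sigma}e^{(p-1)t/b}c_2(t)<\infty$ ensure $\overline k(t)f^{l-1}(t)\int_\Omega\psi^l\le|\Omega|/|\partial\Omega|$ once $f_0$ is small. Finally $\overline u(x,0)=\psi(x)f_0\ge bf_0\ge u_0(x)$ whenever $\sup_\Omega u_0\le bf_0$.

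Hence, fixing $b$ large and then $f_0$ small, $\overline u=\psi f\in C^{2,1}(Q_T)\cap C^{1,0}(\overline{Q_T})$ is a supersolution of~(\ref{v:u})--(\ref{v:n}) on $Q_T$ for every $T>0$. Because $l>1$, Theorem~\ref{p:theorem:comp-prins} applies without any positivity hypothesis and gives $0\le u\le\overline u$ on $\overline{Q_T}$ for every solution $u$ of~(\ref{v:u})--(\ref{v:n}) with $\sup_\Omega u_0\le bf_0$; since $\overline u$ is finite on $\overline\Omega\times[0,T]$ for each $T$, the continuation principle together with the local existence theorem of~\cite{Gladkov} excludes finite-time blow-up, so the solution is global. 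The only genuinely delicate point in this scheme is the asymptotic bound $\limsup_{t\to\infty}c_2(t)f^{p-1}(t)\le b^{-p}$ extracted from~(\ref{E:5.20}); the rest is an accounting of exponents in which the hypothesis $l>p$ enters exactly once.
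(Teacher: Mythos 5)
Your proposal is correct and follows essentially the same route as the paper: a separated supersolution $\overline u=\psi(x)f(t)$ with $\psi$ solving~(\ref{E:5.0}), $f$ solving the Bernoulli ODE $f'=f/b-b^{p-1}c_2f^p$, the key bound $c_2(t)f^{p-1}(t)\lesssim b^{-p}$ extracted from~(\ref{E:5.20}) (the paper gets it via Cauchy's mean value theorem where you use Stolz--Ces\`aro plus a case split), and the same exponent count $l-p(l-1)/(p-1)=-(l-p)/(p-1)<0$ to close the boundary inequality via~(\ref{E:5.21}) for large $b$. Your truncation $\widehat c=\max\{c_2,0\}$ and the explicit treatment of $t<\sigma$ are minor tightenings of the same argument.
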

\begin{proof}
To prove the theorem we construct a supersolution of
(\ref{v:u})--(\ref{v:n}) in $Q_T$ for any $T>0.$ Let us define
$g(t)$ as a positive solution of the following equation
\begin{equation}\label{E:5.22}
g'(t) = \frac {g(t)}{b} - b^{p-1} c_2(t) g^p(t),
\end{equation}
where $b$ was defined in (\ref{E:5.01}). Then $g(t)$ can be
written in an explicit form
\begin{equation}\label{E:5.23}
g(t) = \exp(t/b)\left\{ [g (0)]^{1-p} + (p-1) b^{p-1} \int_{0}^{t}
\exp[(p-1)\tau/b ] c_2(\tau) \, d\tau \right\}^{-1/(p-1)}.
\end{equation}
We rewrite  (\ref{E:5.23}) as following
\begin{equation}\label{E:5.23a}
\left\{ g(t)[c_2(t)]^{1/(p-1)} \right\}^{p-1}  =
\frac{\exp[(p-1)t/b ] c_2(t)}{[g (0)]^{1-p} + (p-1) b^{p-1}
\int_{0}^{t} \exp[(p-1)\tau/b ] c_2(\tau) \, d\tau}.
\end{equation}
Defining the functions
\begin{equation*}
\alpha (t) = \exp[(p-1)t/b ] c_2(t),
\end{equation*}
\begin{equation*}
\beta (t) = [g (0)]^{1-p} + (p-1) b^{p-1} \int_{0}^{t}
\exp[(p-1)\tau/b ] c_2(\tau) \, d\tau
\end{equation*}
and using Cauchy's mean value theorem and (\ref{E:5.20}) for large
values of $a$ we obtain
 \begin{equation}\label{E:5.23b}
\frac{\alpha (t)}{\beta (t)} - \frac{\alpha (a)}{\beta (t)} \leq
 \frac{\alpha (t) - \alpha
(a)}{\beta (t) - \beta (a)} = \frac{\alpha' (\xi)}{\beta' (\xi)} =
\frac{1}{b^p} + \frac{1}{(p-1)b^{p-1}} \frac{c'_2(\xi)}{c_2(\xi)}
\leq \frac{2}{b^p},
    \end{equation}
where $t>a$ and $\xi \in (a,t).$ From (\ref{E:5.23a}),
(\ref{E:5.23b}) we deduce that
\begin{equation}\label{E:5.24}
\left\{ g(t)[c_2(t)]^{1/(p-1)} \right\}^{p-1} \leq
\frac{3}{b^p},\,\, t \geq 0
\end{equation}
for small values of $g(0).$

Now we define
\begin{equation}\label{E:5.24a}
\overline u (x,t) = \psi (x) g (t)
\end{equation}
and show that $\overline u (x,t)$ is a supesolution of
(\ref{v:u})--(\ref{v:n}) in $Q_T$ for any $T>0$ if initial data
are small. By (\ref{E:5.0}), (\ref{E:5.22})  we have
\begin{equation}\label{E:5.24b}
\overline u_t - \Delta\overline u + c(x,t)\overline u^p \geq 0,
\quad x \in \Omega, \, t>0.
\end{equation}
We note that
\begin{equation}\label{E:5.24c}
\lim_{b \to \infty} \frac {m}{b} = 1,
\end{equation}
where $m$ was defined in (\ref{E:5.16a}). Using (\ref{E:5.0}),
(\ref{E:5.21}), (\ref{E:5.24}), (\ref{E:5.24a}), (\ref{E:5.24c})
we find that
\begin{eqnarray}\label{E:5.24d}
\frac{\partial \overline u}{\partial\nu} (x,t) &=& \frac{|\Omega|
}{|\partial\Omega| } g(t) \geq  K_c |\Omega| m^l \left[
\frac{3}{b^{p}} \right]^{\frac{l-1}{p-1}}  g(t) \nonumber \\
&\geq&  K_c \left\{ g(t)[c_2(t)]^{1/(p-1)} \right\}^{l-1} g(t)
\int_{\Omega} \psi^l(y) \, dy
\geq  \overline k (t)  g^l (t) \int_{\Omega} \psi^l(y) \, dy \nonumber \\
&\geq& \int_{\Omega} k(x,y,t) \overline u^l(y,t) \, dy,  \, x \in
\partial\Omega, \; t > 0
 \end{eqnarray}
for large values of $b.$ Thus, by (\ref{E:5.24b}), (\ref{E:5.24d})
and Theorem~\ref{p:theorem:comp-prins} $\overline u (x,t)$ is a
supersolution of (\ref{v:u})--(\ref{v:n}) in $Q_T$ for any $T>0$
if
\begin{equation*}
u_0 (x) \leq g(0) \psi (x).
\end{equation*}
\end{proof}

We shall write $h(x,t)\sim s(t)$ and $z(x,y,t)\sim s(t)$ as
$t\to\infty$ if there exist positive constants $\beta_i, \,
(i=\overline {1,6})$ such that
$$
\beta_1 h(x,t) \leq s(t) \leq \beta_2 h(x,t) \,\, \textrm{for}
\,\, x \in \Omega \,\, \textrm{and} \,\, t \geq \beta_3
$$
and
$$
\beta_4 z(x,y,t) \leq s(t) \leq \beta_5 z(x,y,t) \,\, \textrm{for}
\,\, x \in \partial \Omega, \,\, y \in \Omega \,\,\textrm{and}
\,\, t \geq \beta_6,
$$
respectively.
\begin{remark}
By Theorem~\ref{Th13} and Theorem~\ref{Th14} the condition
(\ref{E:5.28}) is optimal in a certain sense for blow-up in finite
time of any nontrivial solution of~(\ref{v:u})--(\ref{v:n}). In
particular, let $c(x,t)\sim t^\alpha \ln^\beta t \,$ as
$t\to\infty,$ $\alpha \in \mathbb R, \, \beta \in \mathbb R.$ Then
there exist global solutions of~(\ref{v:u})--(\ref{v:n}) for
$k(x,y,t) \leq z(t),$ where $z(t) \sim \{ t^\alpha \ln^\beta t
\}^{ (l-1)/(p-1)}$ as $t\to\infty$ and any nontrivial solution
of~(\ref{v:u})--(\ref{v:n}) blows up in finite time for
$k(x,y,t)\sim \gamma (t) \{ t^\alpha \ln^\beta t \}^{
(l-1)/(p-1)}$ as $t\to\infty$ if $\lim_{t \to \infty}\gamma (t) =
\infty.$
 \end{remark}

\section{Blow-up on the boundary}\label{blow}
\setcounter{equation}{0} \setcounter{theorem}{0}

In this section we show that for problem~(\ref{v:u})--(\ref{v:n})
under certain conditions blow-up  cannot occur at the interior
domain.
\begin{lemma}\label{blow:111}
Let $l> \max \{p, 1 \},$  $\inf\limits_{\partial\Omega\times
Q_T}k(x,y,t)>0$ and the solution $u(x,t)$ of
(\ref{v:u})--(\ref{v:n}) blows up in $t=T$. Then for $t \in [0,T)$
    \begin{equation}\label{blow:inq}
       \int_0^t\int_\Omega u^l(x,\tau)\,dx\,d\tau
\leq s \left( T-t \right)^{-1/(l-1)},\;s>0.
    \end{equation}
\end{lemma}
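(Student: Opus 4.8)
The plan is to derive a differential inequality for the quantity $G(t) = \int_0^t \int_\Omega u^l(x,\tau)\,dx\,d\tau$ by integrating the equation and the boundary condition, then use the blow-up at $t=T$ together with an ODE comparison to obtain the stated bound. First I would set $U(t) = \int_\Omega u(x,t)\,dx$ and integrate \eqref{v:u} over $\Omega$. Using Green's identity and the boundary condition \eqref{v:g} gives
\begin{equation*}
U'(t) = \int_{\partial\Omega} \int_\Omega k(x,y,t) u^l(y,t)\,dy\,dS_x - \int_\Omega c(x,t) u^p(x,t)\,dx.
\end{equation*}
Since $\inf_{\partial\Omega\times Q_T} k(x,y,t) = k_0 > 0$, the boundary term is bounded below by $k_0 |\partial\Omega| \int_\Omega u^l(y,t)\,dy$. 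For the $c$-term, because $l > p$ and $l>1$, Young's (or Hölder's) inequality gives $c(x,t) u^p \le \tfrac{1}{2} k_0 |\partial\Omega| u^l + C_1$ for a constant $C_1$ depending on $\sup_{Q_T} c$, $|\Omega|$, $k_0$, $|\partial\Omega|$, $l$, $p$; here I use that $c$ is continuous on $\overline\Omega\times[0,T]$ hence bounded. Therefore
\begin{equation*}
U'(t) \ge \tfrac{1}{2} k_0 |\partial\Omega| \int_\Omega u^l(x,t)\,dx - C_2
\end{equation*}
with $C_2 = C_1 |\Omega|$.

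Next I would integrate this over $[t,T)$. Since $u$ blows up at $t=T$ and $l>1$, one expects $U(t)\to\infty$ as $t\to T^-$ (this can be justified: if $u(\cdot,t)$ is bounded in, say, $L^1$ up to $T$, a standard bootstrap using the boundary condition and parabolic estimates would contradict blow-up at $T$; alternatively, blow-up of $\sup u$ and the lower bounds available from Theorem~\ref{p:theorem:positive} and the comparison principle give unboundedness of an $L^1$-type norm). Granting $\limsup_{t\to T^-} U(t) = \infty$, integrating gives, for $t\in[0,T)$,
\begin{equation*}
\tfrac{1}{2} k_0 |\partial\Omega| \int_t^T \int_\Omega u^l(x,\tau)\,dx\,d\tau \le C_2 (T-t) + \limsup_{\tau \to T^-} U(\tau) - U(t),
\end{equation*}
which is the wrong direction — so instead the correct route is to bound $G(t)$ from above directly. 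The key observation is that $G$ is nondecreasing and, by the estimate above, $U(T^-) - U(0) \ge \tfrac12 k_0|\partial\Omega| G(T^-) - C_2 T$, which does not immediately bound $G$. I therefore expect the actual argument to run through a reverse Hölder / self-improving estimate: one shows $\int_\Omega u^l\,dx \ge c_3 U(t)^l$ by Jensen, so $U' \ge c_4 U^l - C_2$, whence $U$ blows up and $U(t) \ge c_5 (T-t)^{-1/(l-1)}$ near $T$ by the standard ODE lower bound; then the same inequality $U' \ge \tfrac12 k_0|\partial\Omega|\int_\Omega u^l - C_2$ integrated from $t$ to $T$ and using $U(T^-)=\infty$ would give $\int_t^T\int_\Omega u^l \le \frac{2}{k_0|\partial\Omega|}(U(T^-)-U(t)+C_2(T-t))$ — still infinite.

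Hence the right formulation: one instead estimates $G(t)$ by integrating over $[0,t]$ and comparing with the blow-up rate. Writing $\Phi(t) = \int_0^t U'(\tau)\,d\tau = U(t)-U(0)$ and using $U(t) \ge c_5(T-t)^{-1/(l-1)}$ (from the ODE blow-up lower bound, valid once we know $U$ itself blows up at $T$, which follows from $U'\ge c_4 U^l - C_2$), we get from $\int_\Omega u^l \,dx \le \frac{2}{k_0|\partial\Omega|}(U'(t)+C_2)$ that
\begin{equation*}
G(t) = \int_0^t \int_\Omega u^l(x,\tau)\,dx\,d\tau \le \frac{2}{k_0|\partial\Omega|}\big(U(t) - U(0) + C_2 t\big) \le \frac{2}{k_0|\partial\Omega|}\big(U(t) + C_2 T\big).
\end{equation*}
Finally, to convert $U(t)$ on the right into $(T-t)^{-1/(l-1)}$ I would apply the ODE comparison in the forward direction: from $U' \ge c_4 U^l - C_2$ and blow-up at $T$, a standard argument (integrate $U'/U^l$ and use that the integral must diverge at $T$) yields the matching upper bound $U(t) \le C(T-t)^{-1/(l-1)}$ for $t$ close to $T$, and for $t$ bounded away from $T$ the bound \eqref{blow:inq} holds trivially by enlarging $s$. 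Combining these gives $G(t) \le s(T-t)^{-1/(l-1)}$ with $s$ depending on $k_0,|\partial\Omega|,|\Omega|,\sup_{Q_T}c,l,p,T$. The main obstacle is establishing the two-sided ODE control of $U$ near $T$ — in particular the clean upper bound $U(t)\le C(T-t)^{-1/(l-1)}$, which requires care because $U'\ge c_4U^l - C_2$ only gives a lower bound on growth; the upper bound instead comes from the fact that $G$, not $U$, appears in the claim, so one really only needs $U(t) \le$ (the right-hand side of \eqref{blow:inq}) up to constants, and this is exactly the expected blow-up rate for this type of boundary-flux problem, proved by the standard integral-inequality / Osgood-type argument applied to $U$.
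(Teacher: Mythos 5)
Your ingredients are the right ones and your final route is essentially the paper's argument in different packaging, but you have left the decisive step unproven and mislabelled it in the surrounding discussion. The step you call ``the main obstacle'' --- the upper bound $U(t)\le C(T-t)^{-1/(l-1)}$ --- is in fact immediate from the differential inequality you already have, and it does \emph{not} require $U(T^-)=\infty$, a matching lower bound, or any Osgood-type refinement. Once $U'\ge c_4U^l-C_2$ and $U(t_0)\ge (2C_2/c_4)^{1/l}$ for some $t_0$ (if no such $t_0$ exists, then $U$, hence $G$, stays bounded and (\ref{blow:inq}) holds trivially after enlarging $s$), the level $(2C_2/c_4)^{1/l}$ cannot be recrossed because $U'\ge C_2>0$ there, so $U'\ge \tfrac{c_4}{2}U^l$ on $[t_0,T)$. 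Dividing by $U^l$ and integrating over $(t,T')$ with $T'<T$ gives
\begin{equation*}
\frac{U(t)^{1-l}}{l-1}\;\ge\;\frac{U(t)^{1-l}-U(T')^{1-l}}{l-1}\;\ge\;\frac{c_4}{2}\,(T'-t),
\end{equation*}
since $U(T')^{1-l}\ge 0$; letting $T'\to T$ yields $U(t)\le C(T-t)^{-1/(l-1)}$ on $[t_0,T)$. Note that this uses only that the solution exists on $[0,T)$, not that $U$ blows up at $T$. Your assertions that $U(T^-)=\infty$ and that $U(t)\ge c_5(T-t)^{-1/(l-1)}$ are both unjustified from what you have ($L^\infty$ blow-up does not obviously force $L^1$ blow-up, and $U'\ge c_4U^l$ yields an \emph{upper}, not a lower, bound at earlier times) and, fortunately, unnecessary; they should be deleted. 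With the two-line computation above inserted, your chain $G(t)\le \frac{2}{k_0|\partial\Omega|}\bigl(U(t)-U(0)+C_2t\bigr)\le s(T-t)^{-1/(l-1)}$ closes the proof.

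For comparison, the paper avoids the detour through $U$ altogether: it integrates the equation over $Q_t$ to get $\int_\Omega u(y,t)\,dy\ge \tfrac{k|\partial\Omega|}{2}J(t)-M$ with $J(t)=\int_0^t\int_\Omega u^l\,dx\,d\tau$, bounds $\int_\Omega u(y,t)\,dy\le |\Omega|^{(l-1)/l}(J'(t))^{1/l}$ by H\"older, and thus obtains $J'\ge (c_0J/2)^l$ past a threshold; integrating this from $t$ to $T$ exactly as above gives (\ref{blow:inq}) directly, with no need to convert a bound on $U$ into a bound on $G$ at the end. It is the same mechanism --- a superlinear differential inequality integrated backward from $T$ --- applied to $J$ instead of $U$.
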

    \begin{proof}

Integrating (\ref{v:u}) over $Q_t$ and using Green's identity, we
have
   \begin{eqnarray}\label{b1}
     \int_\Omega   u(y,t) \,dy &=&  \int_\Omega u_0(y) \,dy + \int_0^t \int_{\partial\Omega}
     \int_{\Omega}k(\xi,y,\tau) u^l(y,\tau)\,dy dS_\xi d\tau \nonumber\\
   &-&  \int_0^t \int_{\Omega}  c(y,\tau) u^p(y,\tau) \,dy d\tau
   \geq  \int_0^t \int_{\Omega}  \left( k |\partial \Omega| u^l(y,\tau) - C u^p(y,\tau) \right) dy d\tau \nonumber \\
      &\geq& \frac{k|\partial \Omega|}{2} \int_0^t \int_{\Omega} u^l(y,\tau) \,dy d\tau - M,
    \end{eqnarray}
where
$$
k=\inf\limits_{\partial\Omega\times Q_T}k(x,y,t), \;
C=\sup\limits_{ Q_T} c(x,t), \; M = T |\Omega| \left\{ \frac{2
C^{l/p}}{k |\partial \Omega|} \right\}^{\frac{p}{l-p}}.
$$
Applying H$\ddot{o}$lder's inequality, we obtain
\begin{equation}\label{b2}
     \int_\Omega u(y,t) \,dy \leq |\Omega|^{(l-1)/l}   \left\{ \int_\Omega u^l(y,t) \,dy \right\}^{1/l}.
\end{equation}

Let us introduce
$$
J(t) = \int_0^t\int_\Omega u^l(x,\tau)\,dx\,d\tau.
$$
 Now from (\ref{b1}),(\ref{b2}) we have
\begin{equation*}\label{b3}
      \left( J'(t) \right)^{1/l} \geq c_0 J(t) - M_1, \, c_0 >0, M_1 >0.
\end{equation*}
We suppose there exists $t_0 \in (0,T)$ such that $J(t_0) =
2M_1/c_0$ since otherwise (\ref{blow:inq}) holds. Then $J(t) \leq
2M_1/c_0$ for $0 \leq t \leq t_0$ and
\begin{equation}\label{b3}
      J'(t)  \geq \left( \frac{c_0}{2} J(t) \right)^l  \, \textrm{for} \,\,\, t \geq t_0.
\end{equation}
Integrating~(\ref{b3}) over $(t;T),$ we obtain~(\ref{blow:inq}).
\end{proof}

\begin{theorem}
Let the conditions of Lemma~\ref{blow:111} hold. Then for
problem~(\ref{v:u})--(\ref{v:n}) blow-up can occur only on the
boundary.
\end{theorem}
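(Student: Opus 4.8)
The plan is to use the $L^l$-integral bound from Lemma~\ref{blow:111} together with a standard parabolic representation formula to show that $u(x,t)$ stays bounded on compact subsets of $\Omega$ as $t \to T$. First I would fix a compact set $\Omega' \Subset \Omega$ and a cutoff function $\zeta \in C_0^\infty(\Omega)$ with $\zeta \equiv 1$ on $\Omega'$, $0 \le \zeta \le 1$. Setting $v = \zeta u$, one checks that $v$ solves a linear equation of the form $v_t = \Delta v + f(x,t)$ in $\Omega \times (0,T)$ with $v=0$ on $\partial\Omega$, where $f = -\zeta c u^p - 2\nabla\zeta\cdot\nabla u - u\Delta\zeta$. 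The awkward term is $\nabla\zeta\cdot\nabla u$, which I would remove by integrating by parts against the Dirichlet heat kernel, i.e.\ by writing
\begin{equation*}
v(x,t) = \int_\Omega G_D(x,y,t)\, v(y,0)\,dy + \int_0^t\!\!\int_\Omega G_D(x,y,t-\tau) \bigl(-\zeta c u^p - u\Delta\zeta\bigr)\,dy\,d\tau + \int_0^t\!\!\int_\Omega \nabla_y G_D(x,y,t-\tau)\cdot 2 u \nabla\zeta\, dy\,d\tau,
\end{equation*}
where $G_D$ is the Dirichlet heat kernel on $\Omega$. Since $u > 0$ and $u^p \le 1 + u^l$ (as $p < l$), and $|\nabla_y G_D(x,y,s)| \le C s^{-(n+1)/2} \exp(-c|x-y|^2/s)$, the nonlinear contributions are all controlled by $\int_0^t\!\!\int_\Omega (t-\tau)^{-(n+1)/2} u^l(y,\tau)\,dy\,d\tau$ plus lower-order terms.

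The second step is to estimate this space-time convolution using Lemma~\ref{blow:111}. The issue is that the kernel singularity $(t-\tau)^{-(n+1)/2}$ is not locally integrable in $\tau$ for $n \ge 1$, so one cannot simply pull out $\sup_\Omega$ of the kernel. Instead I would split the $\tau$-integral at $\tau = t/2$ (or more carefully, exploit that $x \in \Omega'$ stays away from $\partial\Omega$ so that for $y$ near $\partial\Omega$ the exponential factor gives extra decay) and use H\"older's inequality in $\tau$ together with the bound $\int_0^t\!\!\int_\Omega u^l \le s(T-t)^{-1/(l-1)}$, or rather its differentiated/localized form: from $(b3)$ one has $J'(t) = \int_\Omega u^l(x,t)\,dx \le C(T-t)^{-l/(l-1)}$. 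Then
\begin{equation*}
\int_0^t \frac{1}{(t-\tau)^{(n+1)/2}} \int_\Omega u^l(y,\tau)\,dy\,d\tau \le C\int_0^t \frac{d\tau}{(t-\tau)^{(n+1)/2}(T-\tau)^{l/(l-1)}},
\end{equation*}
and because the spatial integral of $u^l$ is integrable in time on $[0,T)$ (indeed $J(t)$ is bounded, being the primitive of an integrable function when $l/(l-1) < 1$, which fails — so one must be more careful here), I would instead argue that for $x \in \Omega'$ the heat kernel $G_D(x,y,s)$ and its gradient decay exponentially as $y \to \partial\Omega$ uniformly for $s$ in compact sets, and combine the kernel bound near $\tau = t$ (small $t-\tau$, where the $y$-integral against the Gaussian is a genuine mollification of $u^l(\cdot,\tau)$, still integrable in $\tau$ since $J(t) < \infty$ follows from $(b3)$: $J$ solves $J' \ge (c_0 J/2)^l$ which with the blow-up bound $J(t) \le s(T-t)^{-1/(l-1)}$ gives $J' \le (\text{const})(T-t)^{-l/(l-1)}$, hence $\int_0^T J' = J(T^-)$ may diverge). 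The main obstacle is precisely this borderline integrability: one must use that blow-up of $u$ at an interior point would force $\int_\Omega u^l$ to be \emph{worse} than the Lemma allows, giving a contradiction, rather than hoping for an a priori bound.

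Therefore the cleanest route, which I would adopt, is a contradiction argument combined with a rescaling or a local energy/$L^q$--$L^r$ parabolic regularity estimate. Suppose $u$ blows up at an interior point $x_0 \in \Omega$ at time $T$. Fix a ball $B = B_{2r}(x_0) \Subset \Omega$. By interior parabolic $L^q$ estimates (De Giorgi--Nash--Moser or $L^q$--$L^\infty$ smoothing) applied on $B \times (T-\eta, T)$ to the equation $u_t = \Delta u - c u^p$, with the right-hand side controlled by $u^p \le 1 + u^l \in L^1(B \times (0,T))$ by Lemma~\ref{blow:111}, one gets that $u$ is bounded on $B_r(x_0) \times (T-\eta/2, T)$, provided $l$ is subcritical for this smoothing; the finiteness of $\int_0^T\!\int_B u^l$ is exactly what feeds the iteration. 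More precisely, $\|u\|_{L^\infty(B_r \times (T-\eta/2,T))} \le C\bigl(\|u\|_{L^1(B_{2r}\times(T-\eta,T))} + \|u^l\|_{L^{1}(B_{2r}\times(T-\eta,T))}^{\theta} + 1\bigr)$ for suitable $\theta$, using that $G_D$ can be replaced by the full-space heat kernel after cutoff and that all boundary contributions land in $\partial B$, away from $x_0$. This contradicts blow-up at $x_0$. I would write this out: take $\zeta$ supported in $B_{2r}$, derive the Duhamel formula for $\zeta u$ against the \emph{whole-space} heat kernel, bound the gradient term by duality/integration by parts exactly as above, and close the estimate using Lemma~\ref{blow:111} — the hypothesis $l > \max\{p,1\}$ and $\inf k > 0$ enter only through the Lemma, while the bound $\int_0^t\!\int_\Omega u^l \le s(T-t)^{-1/(l-1)}$ provides the quantitative control that, after the kernel convolution, remains finite up to $t=T$ on any set compactly contained in $\Omega$. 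The main difficulty remains handling the first-order term $\nabla\zeta \cdot \nabla u$ without a gradient bound on $u$; the resolution is to never differentiate $u$ but to move the derivative onto the heat kernel, paying the extra half-power $(t-\tau)^{-1/2}$, which is harmless for the time integrability once $n$ is accounted for by the spatial Gaussian being an honest probability density.
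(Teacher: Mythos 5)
There is a genuine gap. Your first step (a Duhamel/representation formula in which the boundary contribution is controlled by $\int_0^t\int_\Omega u^l$ and Lemma~\ref{blow:111}) is essentially the paper's first step, except that the paper uses the \emph{Neumann} Green function $G_N$ directly, which produces no $\nabla u$ term at all and makes your cutoff gymnastics unnecessary: one gets directly $\sup_{\Omega'}u(x,t)\le \sup_\Omega u_0 + c_\varepsilon|\partial\Omega|\,\sup k\cdot\int_0^t\int_\Omega u^l \le c_1(T-t)^{-1/(l-1)}$ for $\Omega'\Subset\Omega$. But note what this gives: a \emph{rate} as $t\to T$, not boundedness. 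You correctly observe that Lemma~\ref{blow:111} does \emph{not} imply $u^l\in L^1(Q_T)$ (the bound $J(t)\le s(T-t)^{-1/(l-1)}$ diverges as $t\to T$), yet your final argument relies on exactly that: the proposed $L^1$--$L^\infty$ smoothing iteration needs $\|u^l\|_{L^1(B_{2r}\times(T-\eta,T))}<\infty$, which is unavailable, and moreover an estimate of the form $\|u\|_{L^\infty}\le C(\|u\|_{L^1}+\|u^l\|_{L^1}^\theta+1)$ has no reason to hold for a general supercritical power $l$. So the proposal does not close.

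The missing idea is the second, barrier step. Having the rate $\sup_{\Omega'}u\le c_1(T-t)^{-1/(l-1)}$, one takes (following Hu--Yin) a function $f\in C^2(\overline{\Omega'})$ with $f>0$ in $\Omega'$, $f=0$ on $\partial\Omega'$ and $\Delta f-\frac{l}{l-1}\frac{|\nabla f|^2}{f}\ge -c_2$, and compares $u$ in $\Omega'\times(0,T)$ with
\begin{equation*}
w(x,t)=c_3\bigl(f(x)+c_2(T-t)\bigr)^{-1/(l-1)}.
\end{equation*}
A direct computation shows $w_t-\Delta w+c(x,t)w^p\ge 0$, and on $\partial\Omega'$ one has $w=c_3(c_2(T-t))^{-1/(l-1)}\ge c_1(T-t)^{-1/(l-1)}\ge u$ once $c_3\ge c_2^{1/(l-1)}c_1$ (and $w(\cdot,0)\ge u_0$). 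Hence $u\le w$ in $\overline{\Omega'}\times[0,T)$, and since $f>0$ on any compact subset of $\Omega'$, $w$ stays bounded there up to $t=T$. This converts the divergent rate into interior boundedness; without it, no amount of kernel estimates against $\int_0^t\int_\Omega u^l$ will do, precisely because of the borderline non-integrability you identified.
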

\begin{proof}
In the proof we shall use some arguments of~\cite{Deng_Zhao},
\cite{{Hu_Yin}}. Let $G_N(x,y;t-\tau)$ be the Green function of
the heat equation with homogeneous Neumann boundary condition.
Then we have the representation formula:
    \begin{eqnarray}\label{blow:equat}
        u(x,t)&=&\int_\Omega{G_N(x,y;t)u_0(y)}\,dy - \int_0^t{\int_\Omega{G_N(x,y;t-\tau)c(y,\tau)u^p(y,\tau)}\,dy}\,d\tau\nonumber\\
        &&+\int_0^t{\int_{\partial\Omega}{G_N(x,\xi;t-\tau)\int_{\Omega}{k(\xi,y,\tau)u^l(y,\tau)}\,dy}}\,dS_\xi\,d\tau
    \end{eqnarray}
for $(x,t) \in Q_T.$ We now take an arbitrary
$\Omega'\subset\subset\Omega$ with $\partial\Omega'\in C^2$ such
that $\textrm{dist}(\partial\Omega,\Omega')=\varepsilon>0.$ It is
well known (see, for example,~\cite{Kahane}, \cite{Hu_Yin1}) that
\begin{equation}\label{gl:1G_N}
        G_N(x,y;t-\tau)\geq0,\;x,y\in\Omega,\;0\leq\tau<t<T,
    \end{equation}
    \begin{equation}\label{gl:2G_N}
    \int_{\Omega}{G_N(x,y;t-\tau)}\,dy=1,\;x\in\Omega,\;0\leq\tau<t<T.
    \end{equation}
    \begin{equation}\label{blow:G_N}
    0\leq G_N(x,y;t-\tau)\leq
    c_\varepsilon,\;x\in\Omega',\;y\in\partial\Omega,\;0<\tau<t<T,
    \end{equation}
where $c_\varepsilon$ is a positive constant depending on
$\varepsilon.$ By~(\ref{blow:inq}), (\ref{blow:equat}) --
(\ref{blow:G_N}) we have
    \begin{eqnarray*}
        \sup_{\Omega'} u(x,t) &\leq& \sup_\Omega u_0(x) + c_\varepsilon |\partial \Omega| \sup_{\partial\Omega\times Q_T} k(x,y,t)
       \int_0^t \int_\Omega u^l (y,\tau)\,dy \,d\tau \\
       &\leq& c_1(T-t)^{-1/(l-1)}.
    \end{eqnarray*}
As it is shown in~\cite{Hu_Yin}, there exist a function $f(x)\in
C^2(\overline{\Omega'})$ and positive constant $c_{2}$  such that
    \begin{equation}\label{blow:pr2}
        \Delta f -\frac{l}{l-1}\frac{|\nabla f|^2}{f}\geq-c_{2}\textrm{
        in }\Omega',\;  f(x)>0\textrm{ in }\Omega',\;f(x)=0\textrm{ on
        }\partial\Omega'.
    \end{equation}

Now we compare $u(x,t)$  with
    \begin{equation*}
    w(x,t)=c_{3}\left(f(x)+c_{2}(T-t)\right)^{-1/(l-1)}
    \end{equation*}
in $\Omega' \times (0,T),$ where the positive constant $c_{3}$
will be defined below. By~(\ref{blow:pr2}) for $x\in\Omega'$ and
$t\in[0,T)$ we get
    \begin{equation*}
       w_t - \Delta w + c(x,t) w^p \geq
      \frac{w}{(l-1)[f(x)+c_{2}(T-t)]}\left(c_{2}+\Delta f-\frac{l|\nabla f|^2}{(l-1)[f(x)+c_{2}(T-t)]}\right) \geq
      0.
    \end{equation*}
Choosing $c_{3}$ such  that $c_{3} \ge c^{1/(l-1)}_{2} c_{1}$ and
$w(x,0) \ge u_0(x)$ for $x \in \Omega',$ by comparison principle
we conclude
    \begin{equation*}
    u(x,t)\leq w(x,t)\textrm{ in }{\overline{\Omega'}}\times[0,T).
    \end{equation*}
Hence, $u(x,t)$ cannot blow up in $\Omega'\times[0,T]$. Since
$\Omega'$ is an arbitrary subset of $\Omega$, the proof is
completed.
\end{proof}

From~\cite{ArrietaBernal} it is easy to get the following result.
\begin{theorem}
Let  $ p>1, \, $ $\inf\limits_{Q_T} c(x,t)>0$ and the solution of
(\ref{v:u})--(\ref{v:n}) blows up in finite time. Then blow-up
occurs only on the boundary.
\end{theorem}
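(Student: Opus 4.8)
The plan is to reduce the statement to a standard fact about the semilinear heat equation with a \emph{pure power absorption} term and \emph{homogeneous} (linear, but here actually the full nonlinear nonlocal) Neumann condition, and then invoke \cite{ArrietaBernal}. The point of the hypothesis $p>1$ together with $\inf_{Q_T}c(x,t)>0$ is that the absorption term $c(x,t)u^p$ is \emph{super-linear} and uniformly coercive, which forces an a priori bound on the solution in the interior that is independent of the boundary data once one stays away from $\partial\Omega$. The key structural observation is that, since $u\ge 0$ and $c(x,t)\ge c_\star:=\inf_{Q_T}c(x,t)>0$, on any interior subdomain $\Omega'\subset\subset\Omega$ the function $u$ is a subsolution of
\begin{equation*}
v_t=\Delta v-c_\star v^p,\quad x\in\Omega',\ 0<t<T,
\end{equation*}
and the blow-up set of $u$ restricted to $\Omega'$ is controlled by the boundary values of $u$ on $\partial\Omega'$ together with this equation.

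The steps I would carry out, in order, are the following. First I would fix an arbitrary $\Omega'\subset\subset\Omega$ with smooth boundary and set $\varepsilon=\mathrm{dist}(\partial\Omega',\partial\Omega)>0$. Second, I would use the representation formula \eqref{blow:equat} exactly as in the previous proof, together with \eqref{gl:1G_N}--\eqref{gl:2G_N}, discarding the (nonpositive) absorption term and estimating the boundary integral by the Green-function bound analogous to \eqref{blow:G_N} for $\Omega'$ in place of $\Omega'$; this gives an a priori bound $\sup_{\Omega''}u(\cdot,t)\le c(T-t)^{-1/(l-1)}$ on a slightly smaller set $\Omega''\subset\subset\Omega$ with $\Omega'\subset\subset\Omega''$ — but in fact for the $p>1$ case one does not even need the rate, only that $u$ stays bounded on the \emph{lateral} boundary portion $\partial\Omega'\times[0,T)$ up to the blow-up time, which follows from the previous theorem. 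Third, and this is the heart of the matter, I would invoke the result of Arrieta and Bernal \cite{ArrietaBernal}: for the equation $v_t=\Delta v-c_\star v^p$ with $p>1$ on a bounded smooth domain, any solution that is bounded on the parabolic boundary remains bounded in the interior up to any finite time — equivalently, single-point or interior blow-up is impossible because the super-linear absorption $-c_\star v^p$ dominates and prevents the formation of an interior singularity from bounded lateral data. Applying this with $\Omega'$ and with lateral data $u|_{\partial\Omega'\times[0,T)}$, which is finite by the preceding theorem, yields $\sup_{\Omega'\times[0,T)}u<\infty$. Fourth, since $\Omega'\subset\subset\Omega$ was arbitrary, the blow-up set of $u$ is contained in $\partial\Omega$, which is the assertion.

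The main obstacle — and the place where one must be careful — is the rigorous passage from ``$u$ is a subsolution on $\Omega'$ of the absorptive equation with bounded lateral data'' to ``$u$ is bounded on $\Omega'\times[0,T)$.'' One cannot simply compare $u$ with a solution of $v_t=\Delta v-c_\star v^p$ on $\Omega'$ having the same lateral and initial data, because a mere comparison would only give $u\le v$ and $v$ itself could a priori blow up; the real input is the \emph{nonexistence of interior blow-up} for the absorptive equation, which is precisely the content of \cite{ArrietaBernal} and is specific to $p>1$ (for $p\le 1$ the absorption is too weak and the argument fails, consistent with Lemma~\ref{blow:111} requiring $l>\max\{p,1\}$). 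Thus the proof is genuinely a citation-driven reduction: the only nontrivial verification is that the hypotheses of \cite{ArrietaBernal} are met, namely smoothness of $\Omega'$, the lower bound $c_\star>0$ giving a genuine uniformly-elliptic absorptive problem on $\Omega'$, and boundedness of $u$ on $\partial\Omega'\times[0,T)$, the last of which is supplied by the theorem proved just above.
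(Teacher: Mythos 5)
There is a genuine gap, and it sits exactly where you locate ``the heart of the matter.'' Your argument needs $u$ to be bounded on $\partial\Omega'\times[0,T)$ (an a priori bound on a surface strictly inside $\Omega$), and you propose to get it either from the representation-formula estimate or from ``the theorem proved just above.'' Both of those rest on Lemma~\ref{blow:111}, whose hypotheses are $l>\max\{p,1\}$ and $\inf_{\partial\Omega\times Q_T}k(x,y,t)>0$. Neither is assumed in the present theorem, which requires only $p>1$ and $\inf_{Q_T}c(x,t)>0$; the whole point of this last statement is to cover cases (e.g.\ $l\le p$, or $k$ vanishing somewhere) in which the preceding theorem does not apply, and if its hypotheses did hold the present theorem would be redundant. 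So as written you have no way to produce the bounded lateral data that you then feed into \cite{ArrietaBernal}.

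Moreover, the step you explicitly reject --- comparing $u$ on $\Omega'$ with a supersolution of $v_t=\Delta v-c_\star v^p$ --- is in fact the correct mechanism, and it is what the citation actually supplies. For $p>1$ and $c\ge c_\star>0$ the absorption satisfies a Keller--Osserman condition, so one can build a barrier on $\Omega'$ that is $+\infty$ on $\partial\Omega'$ (concretely, $W(x)=C\,\mathrm{dist}(x,\partial\Omega')^{-2/(p-1)}$ is, for suitable $C=C(n,p,c_\star)$, a supersolution of $\Delta W=c_\star W^{p}$). Since $u_t\le\Delta u-c_\star u^{p}$, comparison with such barriers gives a \emph{universal} interior estimate of the form $u(x,t)\le C\bigl(\mathrm{dist}(x,\partial\Omega)^{-2/(p-1)}+t^{-1/(p-1)}\bigr)$, completely independent of the boundary behaviour of $u$: no information about $u$ on $\partial\Omega'$ is needed precisely because the barrier is infinite there. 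This universal bound immediately confines the blow-up set to $\partial\Omega$, and it is the content of \cite{ArrietaBernal} that the paper invokes. Your reading of that result as ``bounded lateral data implies bounded interior'' both understates it (that weaker statement follows from comparison with a constant and holds for every $p$ and $c\ge 0$) and makes your proof hinge on an input you cannot supply under the stated hypotheses.
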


\end{document}